\def\@strippedMR{}
\def\@scanforMR#1#2#3\endscan{
  \ifx#1M\ifx#2R\def\@strippedMR{#3}
  \else\def\@strippedMR{#1#2#3}
  \fi\fi}
\renewcommand\MR[1]{\relax\ifhmode\unskip\spacefactor3000 \space\fi
  \@scanforMR#1\endscan
  MR\MRhref{\@strippedMR}{\@strippedMR}}
\newtheorem*{Thm*}{Theorem}
\newtheorem{Thm}{Theorem}
\newtheorem{Cor}[Thm]{Corollary}
\newtheorem{Prop}[Thm]{Proposition}
\newtheorem{Lemma}[Thm]{Lemma}
\theoremstyle{definition}
\newtheorem{Defn}{Definition}
\newtheorem{Remark}[Defn]{Remark}
\newtheorem{Example}[Defn]{Example}
\newtheorem{Defn-Remark}[Defn]{Definition and Remark}
\newcommand{\mf}[1]{\mathbb{#1}}
\newcommand{\mc}[1]{\mathcal{#1}}
\newcommand{\mb}[1]{\mathbf{#1}}
\DeclareMathOperator{\NC}{\mathit{NC}}
\DeclareMathOperator{\Part}{\mathcal{P}}
\DeclareMathOperator{\Int}{\mathit{Int}}
\DeclareMathOperator{\Inner}{\mathit{Inner}}
\DeclareMathOperator{\Outer}{\mathit{Outer}}
\DeclareMathOperator{\Sing}{\mathit{Sing}}
\DeclareMathOperator{\SC}{\mathit{SC}}
\newcommand{\norm}[1]{\left\Vert#1\right\Vert}
\newcommand{\abs}[1]{\left\vert#1\right\vert}
\newcommand{\chf}[1]{\mathbf{1}_{#1}}
\newcommand{\set}[1]{\left\{#1\right\}}
\newcommand{\ip}[2]{\left \langle #1, #2 \right \rangle}
\newcommand{\state}[1]{\varphi \left[ #1 \right]}
\newcommand{\State}[1]{\Phi \left[ #1 \right]}
\renewcommand{\phi}{\varphi}
\newcommand{\Cum}[2]{R^{#1} \left[ #2 \right]}
\newcommand{\CumFun}[2]{R^{#1} \left( #2 \right)}
\newcommand{\A}[2]{A^{#1} \left( #2 \right)}
\newcommand{\Aa}[2]{A_{#1} \left( #2 \right)}
\newcommand{\Var}[1]{\mathrm{Var} \left[ #1 \right]}
\newcommand{\br}{\medskip\noindent}
\title[Appell polynomials III. C-free theory.]{Appell polynomials and their relatives III. Conditionally free theory.}
\author[M.~Anshelevich]{Michael Anshelevich}
\thanks{This work was supported in part by NSF grant DMS-0613195}
\address{Department of Mathematics, Texas A\&M University, College Station, TX 77843-3368}
\email{manshel@math.tamu.edu}
\subjclass[2000]{Primary 46L53; Secondary 46L54, 05E35}
\date{\today}
\begin{document}

\begin{abstract}
We extend to the multivariate non-commutative context the descriptions of a ``once-stripped'' probability measure in terms of Jacobi parameters, orthogonal polynomials, and the moment generating function. The corresponding map $\Phi$ on states was introduced previously by Belinschi and Nica. We then relate these constructions to the c-free probability theory, which is a version of free probability for algebras with two states, introduced by Bo{\.z}ejko, Leinert, and Speicher. This theory includes as two extreme cases the free and Boolean probability theories. The main objects in the paper are the analogs of the Appell polynomial families in the two state context. They arise as fixed points of the transformation which takes a polynomial family to the associated polynomial family (in several variables), and their orthogonality is also related to the map $\Phi$ above. In addition, we prove recursions, generating functions, and factorization and martingale properties for these polynomials, and describe the c-free version of the Kailath-Segall polynomials, their combinatorics, and Hilbert space representations.
\end{abstract}

\maketitle

\section{Introduction}

\noindent
The title of the paper will be explained below, but we start with some very classical results.

\br
Let $\mu$ be a probability measure on the real line, all of whose moments are finite, which we normalize to have mean zero and variance one. It has a sequence of monic orthogonal polynomials $\set{P_n}$, which satisfy a three term recursion relation. The coefficients in this recursion are the Jacobi parameters of the measure. The measure also has a family of orthogonal polynomials of the second kind $\set{Q_n}$, which satisfy the same recursion with different initial conditions. They are orthogonal with respect to the measure $\nu$ which is the ``once-stripped'' \cite{Damanik-Simon-periodic} version of $\mu$ : the Jacobi parameter sequences of $\nu$ are obtained by removing the first terms of the Jacobi sequences of $\mu$. It is a classical fact going back to Darboux that equivalently,
\[
Q_{n-1}(x) = \int_{\mf{R}} \frac{P_n(x) - P_n(y)}{x-y} \,d\mu(y).
\]
Using continued fraction expansions for the moment generating functions $M^\mu$ and $M^\nu$ of $\mu$ and $\nu$, one obtains the third equivalent description:
\[
1 - M^\mu(z)^{-1} = z^2 M^\nu(z).
\]

\br
The paper starts with an extension of the equivalence between these three results to the multivariate context of \emph{non-commutative} polynomials, with measures being replaced by states (from a special but large class described in \cite{AnsMonic} of states which have monic orthogonal polynomials). More precisely, it turns out that in the multivariate context, the map $\mu \mapsto \nu$ is in general not defined, and we consider its inverse $\Phi: \nu \mapsto \mu$. The map $\Phi$ was introduced by Belinschi and Nica \cite{Belinschi-Nica-B_t,Belinschi-Nica-Free-BM}.

\br
It is easy to see that the unique fixed point of $\Phi$ is the semicircular distribution, and correspondingly the Chebyshev polynomials of the second kind are the unique orthogonal polynomials which satisfy
\[
U_{n-1}(x) = \int_{\mf{R}} \frac{U_n(x) - U_n(y)}{x-y} \,d\mu(y).
\]
However, if we remove the orthogonality requirement and only ask that the polynomials be centered with respect to $\mu$, then we show that such a family exists for any $\mu$. In fact, these are the free Appell polynomials, which were defined in the first paper \cite{AnsAppell} of this series using the following recursion involving the difference quotient:
\[
\frac{A_n(x) - A_n(y)}{x-y} = \sum_{k=0}^{n-1} A_k(x) A_{n-k-1}(y)
\]
(these objects were also considered earlier in \cite{Verde-Star}). The parallel here is with Paul Appell's \cite{Appell} differential recursion
\[
A_n'(x) = n A_{n-1}(x).
\]
The free Appell polynomials have a number of properties which resemble those of the usual Appell families. Moreover, they turned out to be related to free probability \cite{VDN,Nica-Speicher-book}. In the second paper \cite{AnsBoolean}, we performed similar analysis for polynomial families related to Boolean probability theory, one of only two other natural non-commutative probability theories in addition to the free one (and of course the usual theory). In fact, even though the free and Boolean theories are quite different, the corresponding polynomial families turned out to be closely related, and in particular their Meixner families coincide.

\br
The initial motivation for this paper was to explain this fact. Both the free and the Boolean setting are in fact particular cases of a more general construction for a space with \emph{two} expectations, or more precisely an algebra with two states $(\mc{A}, \phi, \psi)$. We define the Appell polynomial families in this setting, so that they restrict correctly to the two cases above. All the familiar results---recursion relations, generating functions, relations to partition lattices and cumulants, Kailath-Segall expansions, and martingale properties---hold in this case, none of which will come as a surprise to the readers familiar with the first two papers in the series. The main function of these results is to confirm that the definition of the Appell polynomials in the two-state setting is the correct one, and unify the free and Boolean constructions. In one variable, the c-free Appell polynomials have also appeared in \cite{Verde-Star} under the name of associated sequences.

\br
More interestingly, the two state free probability theory is directly relevant to the discussion at the beginning of the introduction, which involves two measures $\mu, \nu$. Namely, to the three equivalent definitions of the relation $\mu = \Phi[\nu]$ we now add three more. First, the two state free cumulant generating function for the pair $(\mu, \nu)$ is in this case simple quadratic. Recall that the unique fixed point of $\Phi$ is the semicircle law, and that in the usual free probability theory, only the second free cumulant of this law is non-zero. Second, if $\mu = \Phi[\nu]$, the c-free Appell polynomials are almost orthogonal. They are orthogonal if in addition, $\nu$ is a free product of semicircular distributions, in which case $\mu$ is a free Meixner state. The role of the free Meixner states, and of the map $\Phi$ and its generalizations, in free probability theory with two states is investigated in more detail in \cite{AnsEvolution}. Third, such a pair $(\mu, \nu)$ minimizes a certain type of free Fisher information.

\br
In the free and Boolean theory, the free Meixner distributions arose as those distributions whose orthogonal polynomials are also generalized Appell (more precisely, Sheffer). As mentioned above, in the two-state theory these distributions arise much more naturally, namely they are the ones for which the Appell polynomials themselves are orthogonal. In retrospect, this statement should have been expected. One of the ways to describe the free Meixner distributions is that their Jacobi parameter sequences are constant after the first step. In the language of \cite{AccBozGaussianization}, this corresponds to looking at partitions for which one distinguishes the classes at depth one from the other classes, and this is exactly the underlying combinatorics of the two-state theory. It is then natural to ask for the relation between distributions whose Jacobi parameter sequences are constant after some point (considered for example in \cite{Kato-Mixed-periodic-continued-fractions}) and ``$n$-state'' probability theories. Such theories have indeed been attempted \cite{CD-Ionescu,Mlo99}, but the resulting products are not associative.

\br
The paper is organized as follows. After a brief preliminary Section~\ref{Section:Preliminaries}, in Section~\ref{Section:Second-kind} we prove the equivalence between various descriptions of multivariate non-commutative orthogonal polynomials of the second kind. The next section provides background on non-commutative probability theories. Section~\ref{Section:Appell} treats the c-free Appell polynomials, their orthogonality, and further properties of the transformation $\Phi$. And Section~\ref{Section:Processes} finishes with martingale properties and Fock space representations for the c-free Appell polynomials, which parallel the results in \cite{AnsAppell,AnsBoolean}.

\br
\textbf{Acknowledgements.}
I would like to thank Andu Nica and Serban Belinschi for a number of discussions which contributed to the development of this paper, and for explaining their work to me. I also thank Jamie Mingo for a useful discussion.

\section{Preliminaries I}
\label{Section:Preliminaries}

\noindent
We will freely use the notions and notation from the Preliminaries section of \cite{AnsBoolean}; here we list the highlights.

\subsection{Polynomials and power series}
Let $\mf{C}\langle \mb{x} \rangle = \mf{C}\langle x_1, x_2, \ldots, x_d \rangle$ be all the polynomials with complex coefficients in $d$ non-com\-mu\-ting variables. They form a unital $\ast$-algebra.

\br
For $i = 1, \ldots, d$, define the partial difference quotient operator
\[
\partial_i: \mf{C} \langle \mb{x} \rangle \rightarrow \mf{C} \langle \mb{x} \rangle \otimes \mf{C} \langle \mb{x} \rangle
\]
by a linear extension of $\partial_i(1) = 0$,
\[
\partial_i (x_{u(1)} x_{u(2)} \ldots x_{u(n)}) = \sum_{j: u(j) = i} x_{u(1)} \ldots x_{u(j-1)} \otimes x_{u(j+1)} \ldots x_{u(n)}.
\]
Also, for a non-commutative power series $G$ in
\[
\mb{z} = (z_1, z_2, \ldots, z_d),
\]
define the left non-commutative partial derivative $D_i G$ by a linear extension of $D_i(1) = 0$,
\[
D_i z_{\vec{u}} = \delta_{i u(1)} z_{u(2)} \ldots z_{u(n)}.
\]

\br
A monic polynomial family in
\[
\mb{x} = (x_1, x_2, \ldots, x_d)
\]
is a family $\set{P_{\vec{u}}(\mb{x})}$ indexed by all multi-indices
\[
\bigcup_{k=1}^\infty \set{\vec{u} \in \set{1, \ldots, d}^k}
\]
(with $P_{\emptyset} = 1$ being understood) such that\[
P_{\vec{u}}(\mb{x}) = x_{\vec{u}} + \textsl{lower-order terms}.
\]

\subsection{Algebras and states}
Algebras $\mc{A}$ in this paper will always be complex $\ast$-algebras and, unless stated otherwise, unital. If the algebra is non-unital, one can always form its unitization $\mf{C} 1 \oplus \mc{A}$; if $\mc{A}$ was a $C^\ast$-algebra, its unitization can be made into one as well.

\br
Functionals $\mc{A} \rightarrow \mf{C}$ will always be linear, unital, and $\ast$-compatible. A state is a functional which in addition is positive definite, that is
\[
\state{X^\ast X} \geq 0
\]
(zero value for non-zero $X$ is allowed).

\br
Most of the time we will be working with states on $\mf{C} \langle \mb{x} \rangle$ arising as joint distributions. For
\[
X_1, X_2, \ldots, X_d \in \mc{A}^{sa},
\]
their joint distribution with respect to $\psi$ is a state on $\mf{C} \langle \mb{x} \rangle$ determined by
\[
\state{P(\mb{x})} = \psi^{X_1, X_2, \ldots, X_d}\left[P(x_1, x_2, \ldots, x_d)\right] = \psi \left[P(X_1, X_2, \ldots, X_d)\right].
\]
The numbers $\state{x_{\vec{u}}}$ are the moments of $\phi$. More generally, for $d$ non-commuting indeterminates $\mb{z} = (z_1, \ldots, z_d)$, the series
\[
M(\mb{z}) = \sum_{\vec{u}} \state{x_{\vec{u}}} z_{\vec{u}}
\]
is the moment generating function of $\phi$.

\br
A state $\phi$ on $\mf{C} \langle \mb{x} \rangle$ has a monic orthogonal polynomial system, or MOPS, if for any multi-index $\vec{u}$, there is a monic polynomial $P_{\vec{u}}$ with leading term $x_{\vec{u}}$, such that these polynomials are orthogonal with respect to $\phi$, that is,
\[
\ip{P_{\vec{u}}}{P_{\vec{v}}}_\phi = 0
\]
for $\vec{u} \neq \vec{v}$.

\br
For a probability measure $\mu$ on $\mf{R}$ all of whose moments are finite, its monic orthogonal polynomials $\set{P_n}$ satisfy three-term recursion relations
\begin{equation}
\label{Three-term-recursion}
x P_n(x) = P_{n+1}(x) + \beta_n P_n(x) + \gamma_n P_{n-1}(x),
\end{equation}
with initial conditions $P_{-1} = 0$, $P_0 = 1$. We will call the parameter sequences
\[
(\beta_0, \beta_1, \beta_2, \ldots), (\gamma_1, \gamma_2, \gamma_3, \ldots)
\]
the Jacobi parameter sequences for $\mu$. Generalizations of such parameters for states with MOPS were found in \cite{AnsMulti-Sheffer}, so that every such state is of the form $\phi_{\set{\mc{T}_i}, \mc{C}}$ for two families of matrices $\set{\mc{T}_i^{(n)}}$, $\set{\mc{C}^{(n)}}$.

\subsection{Free Appell polynomials}
\label{Subsection:Free-Appell}
Let $(\mc{A}, \psi)$ be an algebra with a linear functional. The free Appell polynomials are, for each $n \in \mf{N}$, maps
\[
A^{\psi}: \left(\mc{A}^{sa}\right)^n \rightarrow \mc{A}, \qquad (X_1, X_2, \ldots, X_n) \mapsto \A{\psi}{X_1, X_2, \ldots, X_n}
\]
such that $\A{\psi}{X_1, X_2, \ldots, X_n}$ is a polynomial in $X_1, X_2, \ldots, X_n$,
\begin{equation}
\label{Free-Appell-definition-1}
\partial_i \A{\psi}{X_1, X_2, \ldots, X_n} = \A{\psi}{X_1, \ldots, X_{i-1}} \otimes \A{\psi}{X_{i+1}. \ldots, X_n},
\end{equation}
with the obvious modifications for $i = 1, n$ corresponding to $\A{\psi}{\emptyset} = 1$, and
\begin{equation}
\label{Free-Appell-definition-2}
\psi \left[\A{\psi}{X_1, X_2, \ldots, X_n}\right] = 0
\end{equation}
for $n \geq 1$. They were introduced in \cite{AnsAppell} via a slightly more complicated but equivalent definition.

\section{Non-commutative orthogonal polynomials of the second kind}
\label{Section:Second-kind}

\begin{Remark}[Polynomials of the first and second kind]
\label{Remark:Polynomials-first-kind}
For a measure $\mu$ all of whose moments are finite, its monic orthogonal polynomials $\set{P_n}$ satisfy a three-term recursion relation \eqref{Three-term-recursion}, with initial conditions $P_{-1} = 0$, $P_0 = 1$. Since this recursion is second order, there is another family of polynomials satisfying the same recursion, with initial conditions $Q_0 = 0$, $Q_1 = 1$, which can also be defined via
\[
Q_{n-1}(x) = (I \otimes \mu) [\partial P_n].
\]
These are polynomials of the first and second kind corresponding to $\mu$, see Section 1.2.1 of \cite{Akh65}; one also says that $\set{Q_n}$ are associated to $\set{P_n}$. In general, $\set{Q_n}$ are orthogonal with respect to a different measure, obtained from $\mu$ by deleting the first terms of its Jacobi parameter sequences. There is a unique case when the formula above does not give a new family: the Chebyshev polynomials of the second kind are a fixed point for this operation (up to a shift in degree), and are associated to themselves. We now show that, if we drop the condition of orthogonality and only keep centeredness, i.e.\ orthogonality to the constants, such fixed points are exactly the free Appell polynomials. The following proposition actually describes a more general multivariate case.
\end{Remark}

\begin{Prop}
\label{Prop:Appell-characterization}
For any $\psi$, the free Appell polynomials are the unique polynomial family satisfying
\[
(I \otimes \psi) \partial_i P\left(X_1, X_2, \ldots, X_n\right)
= \delta_{i n} P\left(X_1, \ldots, X_{n-1} \right)
\]
and
\[
\psi \left[P\left(X_1, X_2, \ldots, X_n \right) \right] = 0.
\]
\end{Prop}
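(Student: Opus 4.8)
The plan is to establish the two inclusions separately: that the free Appell polynomials satisfy the two displayed conditions, and that they are the only monic family that does. The first is an immediate consequence of their defining relations \eqref{Free-Appell-definition-1}--\eqref{Free-Appell-definition-2}; the whole weight of the proposition sits in a uniqueness argument, which I would reduce to a single leading-term lemma.

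For the forward direction I would apply $I \otimes \psi$ to the defining relation \eqref{Free-Appell-definition-1}, obtaining
\[
(I \otimes \psi)\,\partial_i \A{\psi}{X_1, \ldots, X_n} = \A{\psi}{X_1, \ldots, X_{i-1}} \cdot \psi\!\left[\A{\psi}{X_{i+1}, \ldots, X_n}\right].
\]
By the centering relation \eqref{Free-Appell-definition-2} the scalar factor vanishes whenever $X_{i+1}, \ldots, X_n$ is a nonempty list, that is whenever $i < n$, and equals $\psi\!\left[\A{\psi}{\emptyset}\right] = 1$ when $i = n$. Hence the right-hand side collapses to $\delta_{in}\A{\psi}{X_1, \ldots, X_{n-1}}$, which is the first condition, and the second condition is just \eqref{Free-Appell-definition-2}.

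For uniqueness I would induct on $n$. Given any monic family $P$ satisfying the two conditions, set $R = P - A^\psi$. At level $n$ the leading terms $x_{\vec u}$ cancel, so $R$ has degree at most $n-1$; assuming inductively that $R$ vanishes at all lower levels, the first condition gives $(I \otimes \psi)\,\partial_i R = \delta_{in} R(X_1, \ldots, X_{n-1}) = 0$ for every $i$ (for $i<n$ directly, for $i=n$ by the inductive hypothesis), and the second gives $\psi[R] = 0$. Everything then reduces to the claim that a polynomial $Q$ with $(I \otimes \psi)\,\partial_i Q = 0$ for all $i$ and $\psi[Q] = 0$ must vanish.

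This claim is the main obstacle, and I would prove it by isolating the top homogeneous part $Q_N = \sum_{\abs{\vec u} = N} c_{\vec u}\, x_{\vec u}$ of $Q$. The point is that at the level of top-degree terms $(I \otimes \psi)\,\partial_i$ simply strips off a trailing $x_i$: a direct computation shows that the degree-$(N-1)$ component of $(I \otimes \psi)\,\partial_i x_{\vec u}$ is $\delta_{i, u(N)}\, x_{u(1)} \cdots x_{u(N-1)}$, since any splitting before the last letter leaves behind a genuine moment factor and hence drops the degree by at least two. Consequently the degree-$(N-1)$ part of $(I \otimes \psi)\,\partial_i Q$ is $\sum_{\vec u : u(N) = i} c_{\vec u}\, x_{u(1)} \cdots x_{u(N-1)}$, and as the truncated words run without repetition over all words of length $N-1$, its vanishing for every $i$ forces $c_{\vec u} = 0$ for all $\abs{\vec u} = N$ as soon as $N \geq 1$. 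Thus $Q$ must be a constant, and $\psi[Q] = 0$ then gives $Q = 0$. Once this leading-term analysis is in place, both halves of the proposition follow, and I expect no further difficulty.
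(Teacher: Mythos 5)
Your proposal is correct and follows essentially the same route as the paper: both reduce uniqueness (via induction on $n$ and the centering condition) to showing that any polynomial annihilated by all the maps $(I \otimes \psi)\partial_i$ must be constant, and both establish this by the same leading-term observation, namely that the top-degree component of $(I \otimes \psi)\partial_i x_{\vec{u}}$ is the truncated word $x_{u(1)} \cdots x_{u(N-1)}$ exactly when $u(N) = i$, with distinct words ending in $i$ having distinct truncations. The only difference is bookkeeping: the paper phrases this as linear independence of the images of distinct monomials, selecting a pair $(i,j)$ of maximal image degree, whereas you examine the top homogeneous component of a kernel element for each $i$ directly.
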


\begin{proof}
It immediately follows from their definition that the free Appell polynomials satisfy these properties. To prove uniqueness, it suffices to show that the map
\[
P \mapsto \Bigl((I \otimes \psi) \partial_1 P, (I \otimes \psi) \partial_2 P, \ldots, (I \otimes \psi) \partial_d P \Bigr)
\]
on polynomials contains only constants in its kernel. For this, in turn, it suffices to show that the images under this map of different monomials $x_{\vec{u}}$ are linearly independent. Indeed, take any distinct $\vec{u}_1, \vec{u}_2, \ldots, \vec{u}_k$. Choose some $i, j$ with $(I \otimes \psi) \partial_i (x_{\vec{u}_j})$ of the highest degree. Then, denoting $n = \abs{\vec{u}_j}$, it follows that $u_j(n) = i$ and all $\abs{\vec{u}_s} \leq n$. So $(I \otimes \psi) \partial_i (x_{\vec{u}_j})$ contains the term $x_{u_j(1)} x_{u_j(2)} \ldots x_{u_j(n-1)}$, and the only way one of the other $(I \otimes \psi) \partial_i (x_{\vec{u}_s})$ could contain this term is if $\vec{u}_s = \vec{u}_j$, which is not the case.
\end{proof}

\begin{Defn-Remark}
In \cite{Belinschi-Nica-Free-BM}, Belinschi and Nica defined a map $\Phi$ from states to states via
\[
\eta^{\State{\psi}}(\mb{w}) = 1 - (1 + M^{\State{\psi}}(\mb{w}))^{-1} = \sum_{i=1}^d w_i (1 + M^\psi(\mb{w})) w_i.
\]
As mentioned in the introduction, in one variable this map can be described as the transformation coming from the shift on Jacobi parameter sequences: if $\mu$ has the Jacobi parameter sequences
\[
\set{(\beta_0, \beta_1, \beta_2, \ldots), (\gamma_1, \gamma_2, \gamma_3, \ldots)},
\]
it follows from the continued fraction representations for $\eta^{\State{\mu}}(w)$ and $M^\mu(w)$ that the Jacobi parameter sequences for $\State{\mu}$ are
\[
\set{(0, \beta_0, \beta_1, \ldots), (1, \gamma_1, \gamma_2, \ldots)}.
\]
In particular, the free Meixner distributions (Section~\ref{Subsection:Free-Meixner}) are exactly the images under $\Phi$ of various semicircular distributions:
\[
\mu_{b, c} = \State{\SC(b, 1+c)}.
\]
\end{Defn-Remark}

\begin{Thm}
\label{Thm:OPS-second-kind}
Let $\psi$ be a state with MOPS $\set{Q_{\vec{u}}(\mb{x})}$, corresponding to
\[
\set{\mc{T}_i^{(n)}, 1 \leq i \leq d, n \geq 0}, \set{\mc{C}^{(n)}, n \geq 1},
\]
where $\mc{T}_i^{(n)}, \mc{C}^{(n)}$ are $d^n \times d^n$ matrices. The following are equivalent conditions on a state $\phi$ with mean zero and identity covariance.
\begin{enumerate}
\item
$\phi = \State{\psi}$, that is, $1 - (1 + M^\phi(\mb{w}))^{-1} = \sum_{i=1}^d w_i (1 + M^\psi(\mb{w})) w_i$.
\item
$\phi$ is the state with MOPS corresponding to the matricial sequences
\[
\set{0, \mc{T}_i^{(n-1)} \otimes I, 1 \leq i \leq d, n \geq 1}, \set{I, \mc{C}^{(n-1)} \otimes I, n \geq 2}.
\]
\item
$\phi$ is a state with MOPS $\set{P_{\vec{u}}(\mb{x})}$ which satisfy
\[
(I \otimes \phi) \partial_j P_{(\vec{u}, m)}(\mb{x}) = 0
\]
for $j \neq m$ and
\[
Q_{\vec{u}}(\mb{x}) = (I \otimes \phi) \partial_m P_{(\vec{u},m)}(\mb{x}).
\]
In this case we call the polynomials $\set{Q_{\vec{u}}(\mb{x})}$ the orthogonal polynomials of the second kind for $\phi$.
\end{enumerate}
\end{Thm}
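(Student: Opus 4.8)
The plan is to establish the two equivalences (a)$\Leftrightarrow$(b) and (b)$\Leftrightarrow$(c) separately, matching the three descriptions through the combinatorial middle condition: (a) is a functional equation for $M^\phi$, (b) is a statement about Jacobi data, and (c) is a statement about difference quotients. For (a)$\Leftrightarrow$(b) I would use the matricial continued-fraction expansion of the moment generating function in terms of the Jacobi data $\{\mathcal{T}_i^{(n)}, \mathcal{C}^{(n)}\}$ from \cite{AnsMulti-Sheffer}. This expansion writes $\eta^\phi = 1 - (1 + M^\phi)^{-1}$ recursively, its first level being built from the level-zero and level-one data $\mathcal{T}_i^{(0)}, \mathcal{C}^{(1)}$ together with the generating function $M_{\mathrm{str}}$ of the stripped Jacobi data. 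The hypotheses that $\phi$ have mean zero and identity covariance force $\mathcal{T}_i^{(0)} = 0$ and $\mathcal{C}^{(1)} = I$ and collapse this first level to $\sum_i w_i(1 + M_{\mathrm{str}}(\mathbf{w}))w_i$; the functional equation in (a) therefore holds exactly when $M_{\mathrm{str}} = M^\psi$. Since deleting the first level of a continued fraction corresponds to deleting the first level of Jacobi data, and the factor $\otimes I$ records the reindexing $d^{n-1} \to d^n$ of the level spaces, this is precisely condition (b).

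For (b)$\Rightarrow$(c), I would write out the multivariate recursion for the MOPS $\{P_{\vec{u}}\}$ of $\phi$ and compute $(I\otimes\phi)\partial_j P_{(\vec{u},m)}$ directly, using the Leibniz rule $\partial_j(PQ) = (\partial_j P)(1\otimes Q) + (P\otimes 1)\partial_j Q$ and the identity $\phi[P_{\vec{v}}] = \delta_{\vec{v},\emptyset}$ coming from orthogonality to constants. Carried down the recursion, the shifted form of the Jacobi data in (b) is exactly what makes the $j\neq m$ cross terms cancel and at the same time recasts the surviving $j=m$ expression as the recursion satisfied by the stripped parameters of $\psi$; by uniqueness of the MOPS attached to a recursion this identifies $(I\otimes\phi)\partial_m P_{(\vec{u},m)}$ with $Q_{\vec{u}}$. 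For the converse (c)$\Rightarrow$(b) I would reverse this: the two identities in (c) pin down the lower-order coefficients of each $P_{(\vec{u},m)}$---for instance at the second level they force the linear coefficients to be $\delta_{lm}\psi[x_k]$---and these coefficients are precisely the entries of the Jacobi matrices of $\phi$, so matching them produces the shifted data of (b).

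The main obstacle I anticipate is the matricial and tensor bookkeeping rather than any conceptual subtlety. Aligning the analytic stripping of $M^\phi$ in (a)$\Leftrightarrow$(b) with the level shift and the $\otimes I$ factors in (b) requires careful tracking of how the $d^n$-dimensional level-$n$ spaces are reindexed under $\Phi$. Similarly, the telescoping in (b)$\Leftrightarrow$(c) must correctly absorb the $\mathcal{T}$- and $\mathcal{C}$-corrections of the recursion, and the crux is to organize the computation so that the cancellation of the cross terms---which hinges on the mean-zero, identity-covariance normalization combined with the shifted structure---is transparent.
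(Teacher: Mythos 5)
Your proposal is correct and follows essentially the same route as the paper: (a)$\Leftrightarrow$(b) via the correspondence between matricial Jacobi data and the moment/Boolean cumulant generating function (the paper simply cites the appendix of \cite{AnsBoolean} for this continued-fraction-type fact), and (b)$\Leftrightarrow$(c) by applying $(I \otimes \phi)\partial_j$ to the MOPS recursion of $\phi$, using $\phi[P_{(\vec{u},m)}] = 0$ so that $(I \otimes \phi)\partial_j(x_i P) = x_i (I \otimes \phi)\partial_j P$, and matching the resulting recursion for the $Q_{\vec{u}}$ against the (uniquely determined) recursion data of $\psi$. The only difference is one of emphasis: the paper writes out (c)$\Rightarrow$(b) in detail and dismisses (b)$\Rightarrow$(c) as ``similar, by induction,'' whereas you detail the inductive direction (b)$\Rightarrow$(c) and sketch the coefficient-matching converse.
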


\begin{proof}
The first two statements are equivalent using the results in the appendix of \cite{AnsBoolean}. Next we show that (c) $\Rightarrow$ (b). Let $\phi = \phi_{\set{\Delta_i}, \Gamma}$, so that by Theorem~2 of \cite{AnsMonic}, its MOPS satisfy the recursions
\[
x_i P_{(\vec{u}, m)}(\mb{x}) = P_{(i, \vec{u}, m)} + \sum_{\vec{v}, k} \Delta_{i, (\vec{v}, k), (\vec{u}, m)} P_{(\vec{v}, k)}(\mb{x}) + \delta_{i, u(1)} \Gamma_{(\vec{u}, m)} P_{(u(2), \ldots, m)}(\mb{x}).
\]
If
\[
(I \otimes \phi) \partial_j P_{(\vec{u},m)}(\mb{x}) = \delta_{jm} Q_{\vec{u}}(\mb{x}),
\]
then using
\[
(I \otimes \phi) \partial_j \Bigl( x_i P_{(\vec{u},m)}(\mb{x}) \Bigr)
= \delta_{ij} \state{P_{(\vec{u},m)}(\mb{x})} + x_i (I \otimes \phi) \partial_j P_{(\vec{u},m)}(\mb{x})
= x_i (I \otimes \phi) \partial_j P_{(\vec{u},m)}(\mb{x}),
\]
we get
\[
\begin{split}
x_i \delta_{jm} Q_{\vec{u}}(\mb{x})
& = \delta_{jm} Q_{(i, \vec{u})} + \sum_{\vec{v}, k} \delta_{jk} \Delta_{i, (\vec{v}, k), (\vec{u}, m)} Q_{\vec{v}}(\mb{x}) + \delta_{i, u(1)} \delta_{jm} \Gamma_{(\vec{u}, m)} Q_{(u(2), \ldots)}(\mb{x}) \\
& = \delta_{jm} Q_{(i, \vec{u})} + \sum_{\vec{v}} \Delta_{i, (\vec{v}, j), (\vec{u}, m)} Q_{\vec{v}}(\mb{x}) + \delta_{i, u(1)} \delta_{jm} \Gamma_{(\vec{u}, m)} Q_{(u(2), \ldots)}(\mb{x}).
\end{split}
\]
Since this equality holds for any $j = m$ and the coefficients on the right hand side are uniquely determined, it follows that they do not depend on $m$, so that $\Delta_{i, (\vec{v}, j), (\vec{u}, m)} = \delta_{jm} \mc{T}_{i, \vec{v}, \vec{u}}$ and $\Gamma_{(\vec{u}, m)} = \mc{C}_{\vec{u}}$, in other words $\Delta_i^{(n)} = \mc{T}_i^{(n-1)} \otimes I$ and $\Gamma^{(n)} = \mc{C}^{(n-1)} \otimes I$ for
\[
\psi = \phi_{\set{\mc{T}_i}, \mc{C}}.
\]
The converse is similar, and follows by induction.
\end{proof}

\begin{Remark}
Further statements equivalent to the conditions of the preceding theorem are proven in Section~\ref{Subsection:Orthogonal}:
\begin{itemize}
\item[(d)]
The two-state free cumulant generating function $R^{\phi, \psi}$ of $(\phi, \psi)$ is simple quadratic.
\item[(e)]
The c-free Appell polynomials $A^{\phi, \psi}$ are orthogonal to the degree one c-free Appell polynomials, and $\phi$ has mean zero and identity covariance (Lemma~\ref{Lemma:Quadratic}).
\item[(f)]
For fixed $\psi$, the c-free Fisher information for the pair $(\phi, \psi)$ is minimal among all states $\phi$ with mean zero and identity covariance (Proposition~\ref{Prop:Fisher}).
\end{itemize}
\end{Remark}

\section{Preliminaries II}
\label{Section:Preliminaries2}

\subsection{Partitions}
$\NC(n)$ is the lattice of non-crossing partitions of $n$ elements, and $\Int(n)$ is the corresponding lattice of interval partitions. A class $B \in \pi$ of a non-crossing partition is inner if for $j \in B$,
\[
\exists \ i \stackrel{\pi}{\sim} k \stackrel{\pi}{\not \sim} j: i < j < k,
\]
otherwise $B$ is outer. The collection of all the inner classes of $\pi$ will be denoted $\Inner(\pi)$, and similarly for $\Outer(\pi)$.

\subsection{Cumulants}
For a state $\phi$, its Boolean cumulant generating function is defined by
\[
\eta^\phi(\mb{z}) = 1 - (1 + M^\phi(\mb{z}))^{-1},
\]
and its coefficients are the Boolean cumulants of $\phi$. They can also be expressed in terms of moments of $\phi$ using the lattice of interval partitions. Similarly, the free cumulant generating function of a state $\psi$ is defined by either of the equivalent implicit equations
\begin{equation}
\label{R-w-M}
M^\psi(\mb{w}) = \CumFun{\psi}{\mb{w} (1 + M^\psi(\mb{w}))}
\end{equation}
or
\begin{equation}
\label{R-M-w}
M^\psi(\mb{w}) = \CumFun{\psi}{(1 + M^\psi(\mb{w})) \mb{w}},
\end{equation}
where
\[
(1 + M^\psi(\mb{w})) \mb{w} = \Bigl( (1 + M^\psi(\mb{w})) w_1, (1 + M^\psi(\mb{w})) w_2, \ldots, (1 + M^\psi(\mb{w})) w_d \Bigr).
\]
We will frequently, sometimes without comment, use the change of variables
\begin{equation}
\label{z-M-w}
z_i = (1 + M^\psi(\mb{w})) w_i , \qquad w_i = (1 + \CumFun{\psi}{\mb{z}})^{-1} z_i.
\end{equation}
The coefficients of $\CumFun{\psi}{\mb{z}}$ are the free cumulants of $\psi$, and can also be expressed in terms of the moments of $\psi$ using the lattice of non-crossing partitions.

\subsection{Two-state cumulants}
The typical setting in this paper is a triple $(\mc{A}, \phi, \psi)$, where $\mc{A}$ is an algebra and $\phi, \psi$ are functionals on it. If necessary, we will assume that both $\phi, \psi$ are states with MOPS. By rotation, we can assume without loss of generality that $\phi$ is normalized to have zero means and identity covariance. Then no such assumptions can be made on $\psi$, but the MOPS condition still guarantees that the covariance of $\psi$ is diagonal.

\br
We define the two-state free cumulants of the pair $(\phi, \psi)$ via
\[
\state{x_1 \ldots x_n} = \sum_{\pi \in \NC(n)} \prod_{B \in \Outer(\pi)} \Cum{\phi, \psi}{\prod_{i \in B} x_i} \prod_{C \in \Inner(\pi)} \Cum{\psi}{\prod_{j \in C} x_j}.
\]
Their generating function is
\[
\CumFun{\phi, \psi}{\mb{z}} = \sum_{\vec{u}} \Cum{\phi, \psi}{x_{\vec{u}}} z_{\vec{u}}.
\]
Equivalently (up to changes of variables, this is Theorem~5.1 of \cite{BLS96}), we could have defined the two-state free cumulant generating function via the condition
\begin{equation}
\label{eta-M-R}
\eta^\phi(\mb{w}) = (1 + M^\psi(\mb{w}))^{-1} \CumFun{\phi, \psi}{(1 + M^\psi(\mb{w})) \mb{w}}.
\end{equation}
We also note that for $z_i = (1 + M^\psi(\mb{w})) w_i$,
\begin{equation}
\label{C-cumulant-identity}
\begin{split}
1 + \CumFun{\psi}{\mb{z}} - \CumFun{\phi, \psi}{\mb{z}}
& = 1 + M^\psi(\mb{w}) - \CumFun{\phi, \psi}{(1 + M^\psi(\mb{w})) \mb{w}} \\
& = (1 + M^\psi(\mb{w})) (1 + M^\phi(\mb{w}))^{-1}.
\end{split}
\end{equation}

\br
For elements $X_1, X_2, \ldots, X_n \in \mc{A}^{sa}$, we will denote their joint cumulants
\[
\Cum{\phi, \psi}{X_1, X_2, \ldots, X_n} = \Cum{\phi^{X_1, X_2, \ldots, X_n}, \psi^{X_1, X_2, \ldots, X_n}}{x_1, x_2, \ldots, x_n}
\]
to be the corresponding joint cumulants with respect to their joint distributions.

\begin{Defn}
\label{Defn:c-free}
Let $(\mc{A}, \phi, \psi)$ be an algebra with two states.
\begin{enumerate}
\item
Subalgebras $\mc{A}_1, \ldots, \mc{A}_d \subset \mc{A}$ are conditionally free, or c-free, with respect to $(\phi, \psi)$ if for any $n \geq 2$,
\[
X_i \in \mc{A}_{u(i)}, \quad i = 1, 2, \ldots, n, \qquad u(1) \neq u(2) \neq \ldots \neq u(n),
\]
the relation
\[
\psi[X_1] = \psi[X_2] = \ldots = \psi[X_n] = 0
\]
implies
\begin{equation}
\label{centered-product}
\state{X_1 X_2 \ldots X_n} = \state{X_1} \state{X_2} \ldots \state{X_n}.
\end{equation}
\item
The subalgebras are $(\phi | \psi)$ free if for $X_1, X_2, \ldots, X_n \in \bigcup_{j=1}^d \mc{A}_j$,
\[
\Cum{\phi, \psi}{X_1, X_2, \ldots, X_n} = 0
\]
unless all $X_i$ lie in the same subalgebra.
\end{enumerate}
\end{Defn}

\noindent
As pointed out in \cite{Boz-Bryc-Two-states}, these properties are \emph{not} equivalent, however they become equivalent under the extra requirement that the subalgebras are $\psi$-freely independent. In any case, throughout most of the paper we will be working with cumulants, and will only invoke conditional freeness itself in Section~\ref{Subsection:Processes}.

\begin{Example}
If $X, Z$ are c-free from $Y$, then (Lemma 2.1 of \cite{BLS96})
\[
\state{X Y} = \state{X} \state{Y},
\]
\[
\state{X Y Z}
= \state{X} \state{Y} \state{Z} + \left( \state{X Z} - \state{X} \state{Z} \right) \psi[Y].
\]
\end{Example}

\begin{Lemma}
\label{Lemma:Endpoints-c-free}
Under the hypothesis of the preceding definition, the conclusion \eqref{centered-product} remains valid without any assumptions on the endpoints $\psi[X_1]$ and $\psi[X_n]$
\end{Lemma}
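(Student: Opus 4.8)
The plan is to argue by induction on the length $n$ of the product, reducing the general (endpoint-free) case to the already-established centered relation \eqref{centered-product} by splitting each endpoint into a $\psi$-centered part plus a scalar. Set $X_1^{\circ} = X_1 - \psi[X_1]\mathbf{1}$ and $X_n^{\circ} = X_n - \psi[X_n]\mathbf{1}$, so that $\psi[X_1^{\circ}] = \psi[X_n^{\circ}] = 0$ and (the subalgebras being unital) $X_1^{\circ} \in \mc{A}_{u(1)}$, $X_n^{\circ} \in \mc{A}_{u(n)}$. The essential observation is that only the two endpoints may be centered harmlessly: deleting $X_1$ leaves $X_2 \cdots X_n$ and deleting $X_n$ leaves $X_1 \cdots X_{n-1}$, both of which remain alternating, whereas centering an interior letter would fuse two equal neighbouring indices and violate $u(1) \neq \cdots \neq u(n)$. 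This is exactly why the statement is about the endpoints. I take the inductive hypothesis $H(n)$ to be the assertion $\state{X_1 \cdots X_n} = \state{X_1} \cdots \state{X_n}$ whenever the product alternates and the interior letters $X_2, \ldots, X_{n-1}$ are $\psi$-centered, with no condition on $X_1, X_n$; the base case $n = 1$ is trivial and $n = 2$ is the first formula of the preceding Example (Lemma 2.1 of \cite{BLS96}).

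For the inductive step with $n \geq 2$, I would first center the left endpoint,
\[
\state{X_1 X_2 \cdots X_n} = \state{X_1^{\circ} X_2 \cdots X_n} + \psi[X_1]\, \state{X_2 \cdots X_n},
\]
where the second term is a length-$(n-1)$ alternating product whose interior letters are still centered, hence equals $\psi[X_1]\state{X_2}\cdots\state{X_n}$ by $H(n-1)$. In the first term the left end is now centered, so I center the right end as well,
\[
\state{X_1^{\circ} X_2 \cdots X_n} = \state{X_1^{\circ} X_2 \cdots X_{n-1} X_n^{\circ}} + \psi[X_n]\, \state{X_1^{\circ} X_2 \cdots X_{n-1}}.
\]
Here every letter of $\state{X_1^{\circ} X_2 \cdots X_{n-1} X_n^{\circ}}$ is $\psi$-centered, so \eqref{centered-product} applies directly, while the remaining term is another length-$(n-1)$ product covered by $H(n-1)$.

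Writing $a_1 = \state{X_1}$, $a_n = \state{X_n}$, $P = \state{X_2}\cdots\state{X_{n-1}}$, and using $\state{X_1^{\circ}} = a_1 - \psi[X_1]$, $\state{X_n^{\circ}} = a_n - \psi[X_n]$, the three contributions assemble into
\[
P\Bigl[(a_1 - \psi[X_1])(a_n - \psi[X_n]) + \psi[X_n](a_1 - \psi[X_1]) + \psi[X_1]\, a_n\Bigr],
\]
and the bracket telescopes, since $(a_1 - \psi[X_1])\bigl[(a_n - \psi[X_n]) + \psi[X_n]\bigr] + \psi[X_1]\, a_n = (a_1 - \psi[X_1])\,a_n + \psi[X_1]\,a_n = a_1 a_n$. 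Thus the whole expression is $a_1 P a_n = \state{X_1}\state{X_2}\cdots\state{X_n}$, which closes the induction. The only point demanding care — the nominal main obstacle — is organizational rather than conceptual: at each reduction one must verify that the shorter products genuinely satisfy $H(n-1)$ (alternation intact, interior letters centered), and that the $\psi$-weighted boundary terms cancel; the telescoping above is precisely the verification that they do.
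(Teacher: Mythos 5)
Your proof is correct and rests on the same idea as the paper's: split each endpoint into its $\psi$-centered part plus a scalar, factor every fully $\psi$-centered alternating product via \eqref{centered-product} (with the $n=2$ case supplied by the preceding Example, i.e.\ Lemma 2.1 of \cite{BLS96}), and let the resulting sum telescope to $\state{X_1}\state{X_2}\cdots\state{X_n}$. The only difference is organizational: the paper centers both endpoints simultaneously, writing $\state{X_1 Y X_n}$ with $Y = X_2 \cdots X_{n-1}$ as a sum of four terms, each of whose arguments already satisfies the hypothesis of the definition, so no induction on $n$ is needed, whereas you center one endpoint at a time and handle the leftover length-$(n-1)$ products by an induction hypothesis.
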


\begin{proof}
For $n=2$, the result is stated in the preceding example. For $n \geq 3$, denote $Y = X_2 \ldots X_{n-1}$. Then
\[
\begin{split}
\state{X_1 Y X_n}
& = \state{(X_1 - \psi[X_1]) Y (X_n - \psi[X_n])} + \psi[X_1] \state{Y (X_n - \psi[X_n])} \\
&\quad + \state{(X_1 - \psi[X_1]) Y} \psi[X_n] + \psi[X_1] \state{Y} \psi[X_n] = 0,
\end{split}
\]
since for each of these terms, the argument of $\phi$ satisfies the hypothesis of the definition.
\end{proof}

\begin{Example}
\label{Example:Conditional-freeness}
The following are important particular cases of conditional freeness.
\begin{enumerate}
\item
If $\phi = \psi$, so that $(\mc{A}, \phi)$ is an algebra with a single state, conditional freeness with respect to $(\phi, \phi)$ is the same as free independence with respect to $\phi$. Moreover, $R^{\phi, \phi} = R^\phi$.
\item
If $\mc{A}$ is a non-unital algebra, define a state $\delta_0$ on its unitization $\mf{C} 1 \oplus \mc{A}$ by $\delta_0[1] = 1$, $\delta_0[\mc{A}] = 0$. Then conditional freeness of subalgebras $(\mf{C} 1 \oplus \mc{A}_1), \ldots, (\mf{C} 1 \oplus \mc{A}_d)$ with respect to $(\phi, \delta_0)$ is the same as Boolean independence of subalgebras $\mc{A}_1, \ldots, \mc{A}_d$ with respect to $\phi$. Moreover, $R^{\phi, \delta_0} = \eta^\phi$. The Boolean theory has been treated as a particular case of the c-free theory in \cite{Franz-Boolean-circle} and in a number of other sources.
\item
Specializing the preceding example, if $\mc{A} = \mf{C} \langle \mb{x} \rangle$, it is a unitization of the algebra of polynomials without constant term, and $\delta_0[P]$ is the constant term of a polynomial, so that we denote, even for non-commuting polynomials,
\[
\delta_0[P] = P(0).
\]
\end{enumerate}
\end{Example}

\noindent
See \cite{Lenczewski-Unification,Mlotkowski-Operator-conditional,Yoshida-Delta,Lehner-Cumulants-I,Oravecz-Pure-convolutions,Franz-Multiplicative-monotone,Popa-c-free-amalgamation}, as well as references in \cite{AnsEvolution} for other particular cases and generalizations of conditional freeness; the appearance of the free Meixner laws (see below) in related contexts has been observed even more widely.

\subsection{Convolutions}
\label{Subsection:Convolutions}
If $\phi, \psi$ are two unital linear functionals on $\mf{C} \langle \mb{x} \rangle$, then $\phi \boxplus \psi$ is their free convolution, that is a unital linear functional on $\mf{C} \langle \mb{x} \rangle$ determined by
\[
\CumFun{\phi}{\mb{z}} + \CumFun{\psi}{\mb{z}} = \CumFun{\phi \boxplus \psi}{\mb{z}}.
\]
Similarly, $\phi \uplus \psi$, their Boolean convolution, is a unital linear functional on $\mf{C} \langle \mb{x} \rangle$ determined by
\[
\eta^\phi(\mb{z}) + \eta^\psi(\mb{z}) = \eta^{\phi \uplus \psi}(\mb{z}).
\]
See Lecture~12 of \cite{Nica-Speicher-book} for the relation between free convolution and free independence; the relation in the Boolean case is similar.

\subsection{Free Meixner distributions and states}
\label{Subsection:Free-Meixner}
The semicircular distribution with mean $\alpha$ and variance $\beta$ is
\[
d\SC(\alpha, \beta)(x) = \frac{1}{2 \pi \beta} \sqrt{4 \beta - (x - \alpha)^2} \chf{[\alpha - 2 \sqrt{\beta}, \alpha + 2 \sqrt{\beta}]}(x) \,dx.
\]

\br
For $b \in \mf{R}$, $1 + c \geq 0$, the free Meixner distributions, normalized to have mean zero and variance one, are
\[
d\mu_{b,c}(x) = \frac{1}{2 \pi} \frac{\sqrt{4 (1 + c) - (x - b)^2}}{1 + b x + c x^2} \,dx + \text{ zero, one, or two atoms}.
\]
They are characterized by their Jacobi parameter sequences having the special form
\[
(0, b, b, b, \ldots), (1, 1+c, 1+c, 1+c, \ldots),
\]
or by the special form of the generating function of their orthogonal polynomials. In particular, $\mu_{0,0} = \SC(0,1)$ is the standard semicircular distribution, $\mu_{b,0}$ are the centered free Poisson distributions, and $\mu_{b,-1}$ are the normalized Bernoulli distributions.

\br
More generally, free Meixner states are states on $\mf{C} \langle \mb{x} \rangle$, characterized by a number of equivalent conditions (see \cite{AnsFree-Meixner}), among them the equations
\begin{equation}
\label{free-quadratic-PDE}
D_i D_j \CumFun{\phi}{\mb{z}} = \delta_{ij} + \sum_{k=1}^d B_{ij}^k D_k \CumFun{\phi}{\mb{z}} + C_{ij} D_i \CumFun{\phi}{\mb{z}} D_j \CumFun{\phi}{\mb{z}}.
\end{equation}
for certain $\set{B_{ij}^k, C_{ij}}$. In \cite{AnsBoolean}, these equations were shown to be equivalent to
\begin{equation}
\label{Boolean-quadratic-PDE}
D_i D_j \eta^\phi(\mb{z}) = \delta_{ij} + \sum_{k=1}^d B_{ij}^k D_k \eta^\phi(\mb{z}) + (1 + C_{ij}) D_i \eta^\phi(\mb{z}) D_j \eta^\phi(\mb{z}).
\end{equation}

\section{Appell polynomials}
\label{Section:Appell}

\subsection{Definition and basic properties}

\begin{Defn}
Let $(\mc{A}, \phi, \psi)$ be an algebra with two functionals. Define the c-free Appell polynomials to be, for each $n \in \mf{N}$, maps
\[
A^{\phi, \psi}: \left(\mc{A}^{sa}\right)^n \rightarrow \mc{A}, \qquad (X_1, X_2, \ldots, X_n) \mapsto \A{\phi, \psi}{X_1, X_2, \ldots, X_n}
\]
by specifying that $\A{\phi, \psi}{X_1, X_2, \ldots, X_n}$ is a polynomial in $X_1, X_2, \ldots, X_n$,
\begin{equation}
\label{Appell-definition-1}
\partial_i \A{\phi, \psi}{X_1, X_2, \ldots, X_n} = \A{\psi}{X_1, \ldots, X_{i-1}} \otimes \A{\phi, \psi}{X_{i+1}. \ldots, X_n}
\end{equation}
with the obvious modifications for $i = 1, n$ corresponding to $\A{\phi, \psi}{\emptyset} = 1$, and
\begin{equation}
\label{Appell-definition-2}
\state{\A{\phi, \psi}{X_1, X_2, \ldots, X_n}} = 0
\end{equation}
for $n \geq 1$. This determines the polynomials uniquely. Here $\A{\psi}{\cdot}$ are the free Appell polynomials for $\psi$ (Section~\ref{Subsection:Free-Appell}).
\end{Defn}

\noindent
Each $\A{\phi, \psi}{\cdot}$ is a multilinear map, and its value is a polynomial in its arguments. In particular, define $\Aa{\vec{u}}{x_1, x_2, \ldots, x_d} \in \mf{C} \langle \mb{x} \rangle$ to be the polynomial such that
\[
\A{\phi, \psi}{X_{u(1)}, X_{u(2)}, \ldots, X_{u(n)}} = \Aa{\vec{u}}{X_1, X_2, \ldots, X_d}.
\]
Note that the polynomial $\Aa{\vec{u}}{\mb{x}}$ depends on the choice of $X_1, X_2, \ldots, X_d$, so in cases where confusion may arise we will denote this polynomial by
\[
A_{\vec{u}}^{X_1, X_2, \ldots, X_d}(x_1, x_2, \ldots, x_d).
\]

\begin{Example}
The low order c-free Appell polynomials are
\begin{align*}
\A{\phi, \psi}{X_1} & = X_1 - \Cum{\phi, \psi}{X_1}, \\
\A{\phi, \psi}{X_1, X_2} & = X_1 X_2 - X_1 \Cum{\phi, \psi}{X_2} - \Cum{\psi}{X_1} X_2 + \Cum{\psi}{X_1} \Cum{\phi, \psi}{X_2} - \Cum{\phi, \psi}{X_1, X_2}, \\
\A{\phi, \psi}{X_1, X_2, X_3} & = X_1 X_2 X_3 - X_1 X_2 \Cum{\phi, \psi}{X_3} - X_1 \Cum{\psi}{X_2} X_3 - \Cum{\psi}{X_1} X_2 X_3 \\
&\quad + X_1 \Cum{\psi}{X_2} \Cum{\phi, \psi}{X_3} - X_1 \Cum{\phi, \psi}{X_2, X_3}  + \Cum{\psi}{X_1} X_2 \Cum{\phi, \psi}{X_3} \\
&\quad + \Cum{\psi}{X_1} \Cum{\psi}{X_2} X_3 - \Cum{\psi}{X_1, X_2} X_3 - \Cum{\psi}{X_1} \Cum{\psi}{X_2} \Cum{\phi, \psi}{X_3} \\
&\quad + \Cum{\psi}{X_1} \Cum{\phi, \psi}{X_2, X_3} + \Cum{\psi}{X_1, X_2} \Cum{\phi, \psi}{X_3} - \Cum{\phi, \psi}{X_1, X_2, X_3}.
\end{align*}
\end{Example}

\begin{Prop}
\label{Prop:Recursions}
Fix $(\mc{A}, \phi, \psi)$.
\begin{enumerate}
\item
For fixed $(X_1, X_2, \ldots, X_d)$, the generating function of their c-free Appell polynomials is
\[
H^{\phi, \psi}(\mb{x}, \mb{z}) = 1 + \sum_{\vec{u}} \Aa{\vec{u}}{\mb{x}} z_{\vec{u}} = \left( 1 - \mb{x} \cdot \mb{z} + \CumFun{\psi}{\mb{z}} \right)^{-1} \left( 1 + \CumFun{\psi}{\mb{z}} - \CumFun{\phi, \psi}{\mb{z}} \right).
\]
\item
The polynomials satisfy a recursion relation
\[
\begin{split}
X \A{\phi, \psi}{X_1, \ldots, X_n}
& = \A{\phi, \psi}{X, X_1, \ldots, X_n} \\
& \quad + \sum_{j=0}^{n-1} \Cum{\psi}{X, X_1, \ldots, X_j} \A{\phi, \psi}{X_{j+1}, \ldots, X_n} + \Cum{\phi, \psi}{X, X_1, \ldots, X_n}.
\end{split}
\]
\item
The monomials have an expansion in terms of the c-free Appell polynomials
\begin{multline*}
X_1 X_2 \ldots X_n
= \sum_{k=0}^n \sum_{\substack{B \subset \set{1, \ldots, n} \\ B = \set{i(1), \ldots, i(k)}}} \prod_{j=1}^k \psi \left[ X_{i(j-1) + 1} \ldots X_{i(j) - 1} \right] \state{X_{i(k) + 1} \ldots X_{u(n)}} \\
\times \A{\phi, \psi}{X_{i(1)}, X_{i(2)}, \ldots, X_{i(k)}}.
\end{multline*}
\item
The explicit formula for the c-free Appell polynomials is
\[
\begin{split}
& \A{\phi, \psi}{X_1, \ldots, X_n} \\
&\qquad = \sum_{\pi \in \Int(n)} \sum_{S \subset \Sing(\pi)} (-1)^{\abs{S^c}} \prod_{\substack{B \in S^c \\ n \not \in B}} \Cum{\psi}{X_i: i \in B} \prod_{\substack{B \in S^c \\ n \in B}}  \Cum{\phi, \psi}{X_i: i \in B} \prod_{\set{i} \in S} X_i.
\end{split}
\]
See Proposition~\ref{Prop:Kailath-Segall} part (c) for notation.
\end{enumerate}
\end{Prop}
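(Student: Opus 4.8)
The plan is to prove the generating function formula (a) first and then extract (b), (c), and (d) from it. Write $H = H^{\phi,\psi}(\mb{x},\mb{z}) = D^{-1} N$ with $D = 1 - \mb{x} \cdot \mb{z} + \CumFun{\psi}{\mb{z}}$ and $N = 1 + \CumFun{\psi}{\mb{z}} - \CumFun{\phi,\psi}{\mb{z}}$. Since the defining relations \eqref{Appell-definition-1} and \eqref{Appell-definition-2} determine the c-free Appell polynomials uniquely, it suffices to check that the coefficients of this $H$ satisfy them. I will use that, by \cite{AnsAppell}, $D^{-1}$ is precisely the free Appell generating function $H^\psi$, so that its coefficients are the $\A{\psi}{\cdot}$.

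To verify \eqref{Appell-definition-1} I would differentiate $H$ with the coderivation rule for $\partial_i$. As $N$ is a series in $\mb{z}$ alone, $\partial_i N = 0$, and since $\partial_i D = -(1 \otimes 1) z_i$, the inverse rule $\partial_i D^{-1} = -(D^{-1} \otimes 1)(\partial_i D)(1 \otimes D^{-1})$ yields $\partial_i H = (D^{-1} \otimes 1)\, z_i\, (1 \otimes H) = (H^\psi \otimes 1)\, z_i\, (1 \otimes H)$; reading off the coefficient of $z_{\vec{u}}$ recovers exactly the splitting in \eqref{Appell-definition-1}, the inserted $z_i$ landing between the two tensor legs. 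To verify \eqref{Appell-definition-2} it is enough to show $\phi[H] = 1$. Here I would pass to the change of variables $z_i = (1 + M^\psi(\mb{w})) w_i$ of \eqref{z-M-w}: then $\CumFun{\psi}{\mb{z}} = M^\psi(\mb{w})$ by \eqref{R-M-w} and $\mb{x} \cdot \mb{z} = (1 + M^\psi(\mb{w}))(\mb{x} \cdot \mb{w})$, so $D = (1 + M^\psi(\mb{w}))(1 - \mb{x} \cdot \mb{w})$. Applying $\phi$ to $D^{-1} = (1 - \mb{x} \cdot \mb{w})^{-1}(1 + M^\psi(\mb{w}))^{-1}$ gives $(1 + M^\phi(\mb{w}))(1 + M^\psi(\mb{w}))^{-1}$, while \eqref{C-cumulant-identity} identifies $N = (1 + M^\psi(\mb{w}))(1 + M^\phi(\mb{w}))^{-1}$; the two factors cancel, giving $\phi[H] = 1$.

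The remaining parts follow from the closed form. For (b) I would rewrite (a) as $[1 - \mb{x} \cdot \mb{z} + \CumFun{\psi}{\mb{z}}] H = N$, hence $(\mb{x} \cdot \mb{z}) H = (H - 1) + \CumFun{\psi}{\mb{z}}(H - 1) + \CumFun{\phi,\psi}{\mb{z}}$, and compare the coefficient of $z_i z_{\vec{u}}$; the appearance of the non-empty Appell factor $H - 1$ is exactly why the inner sum runs over $0 \le j \le n-1$. For (c) I would substitute the same $z_i = (1 + M^\psi(\mb{w})) w_i$ into $H$ and multiply by $1 + M^\phi(\mb{w})$: the computation above collapses $H$ to $(1 - \mb{x} \cdot \mb{w})^{-1}(1 + M^\phi(\mb{w}))^{-1}$, so the product is $(1 - \mb{x} \cdot \mb{w})^{-1} = \sum_{\vec{u}} x_{\vec{u}} w_{\vec{u}}$; interpreting the substitution as inserting a $\psi$-moment gap before each distinguished index and $1 + M^\phi(\mb{w})$ as a final $\phi$-moment tail, the coefficient of $w_{\vec{u}}$ is the stated sum over subsets $B$. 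For (d) I would expand $D^{-1} = \sum_{m \ge 0} (\mb{x} \cdot \mb{z} - \CumFun{\psi}{\mb{z}})^m$ into sequences of interval blocks, each a kept singleton $x_i z_i$ or a $\psi$-cumulant block, and multiply by $N$; the contributions of $D^{-1} \cdot \CumFun{\psi}{\mb{z}}$ cancel those terms of $D^{-1} \cdot 1$ whose last block (the one containing $n$) is a $\psi$-cumulant block, leaving only a block containing $n$ that is either a kept $X_n$ or a $\phi,\psi$-cumulant block. This is precisely the sum over $\pi \in \Int(n)$ and $S \subseteq \Sing(\pi)$, with the sign $(-1)^{\abs{S^c}}$ accounting for the minus signs on the cumulant blocks.

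The main obstacle is bookkeeping rather than a single hard idea. In (a) one must place the inserted $z_i$ correctly between the tensor legs, since the $\mb{z}$-variables do not commute; and in (d) one must check that the cancellation between the $D^{-1} \cdot 1$ and $D^{-1} \cdot \CumFun{\psi}{\mb{z}}$ contributions is exactly what promotes the final block from a $\psi$-cumulant to a $\phi,\psi$-cumulant. Both hinge on carefully tracking which block contains the last index $n$.
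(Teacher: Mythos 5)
Your parts (a), (b), and (c) follow the paper's own proof essentially step for step: the same coderivation computation giving $\partial_i H = (H^\psi \otimes 1)\, z_i\, (1 \otimes H)$, the same use of the change of variables \eqref{z-M-w} and the identity \eqref{C-cumulant-identity} to conclude $\state{H^{\phi, \psi}(\mb{x}, \mb{z})} = 1$ (the paper's equation \eqref{H-reduction}), the same coefficient comparison in $\left(1 - \mb{x}\cdot\mb{z} + \CumFun{\psi}{\mb{z}}\right) H = 1 + \CumFun{\psi}{\mb{z}} - \CumFun{\phi, \psi}{\mb{z}}$ for (b), and the same substitution $z_i = (1 + M^\psi(\mb{w})) w_i$ followed by multiplication on the right by $1 + M^\phi(\mb{w})$ for (c).

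Part (d) is where you genuinely depart from the paper, and your argument is correct. The paper proves (d) by forward reference: it combines parts (c) and (d) of Proposition~\ref{Prop:Kailath-Segall}, i.e.\ it expands $\A{\phi, \psi}{X_1, \ldots, X_n}$ as a sum of c-free Kailath--Segall polynomials over interval partitions and then substitutes the explicit interval-partition formula for each $W$, relying on facts proven later by induction on the Fock-space recursions (including the cancellation of the $X(f_B)$ terms with $\abs{B} \geq 2$). You instead stay entirely within the generating function: expanding $D^{-1} = \sum_{m \geq 0} \left(\mb{x}\cdot\mb{z} - \CumFun{\psi}{\mb{z}}\right)^m$ exhibits the coefficient of $z_{\vec{u}}$ in $D^{-1}$ as a sum over interval partitions whose blocks are either kept singletons or $\psi$-cumulant blocks, each cumulant block carrying a factor $-1$ (this is the free Appell explicit formula, consistent with $D^{-1} = H^\psi$), and multiplying on the right by $N = 1 + \CumFun{\psi}{\mb{z}} - \CumFun{\phi, \psi}{\mb{z}}$ does exactly what you claim: the $D^{-1}\CumFun{\psi}{\mb{z}}$ contribution cancels, term by term and with opposite sign, those terms of $D^{-1}$ whose block containing $n$ is a $\psi$-cumulant block, while the $-D^{-1}\CumFun{\phi, \psi}{\mb{z}}$ contribution supplies the terms whose block containing $n$ is a $(\phi,\psi)$-cumulant block, again with one factor of $-1$ for that block; the survivors are precisely the terms of the stated formula, with sign $(-1)^{\abs{S^c}}$ counting one $-1$ per cumulant block. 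This route makes Proposition~\ref{Prop:Recursions} self-contained, independent of the Fock-space material in Section~\ref{Section:Processes}, which is a real advantage; what it loses is the structural point made visible by the paper's proof, namely that the explicit formula for the c-free Appell polynomials is the shadow of their expansion into Kailath--Segall polynomials.
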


\begin{proof}
For part (a), we check that, identifying $\mf{C} \langle \mb{x} \rangle \otimes \mf{C} \langle \mb{x} \rangle = \mf{C} \langle \mb{x}, \mb{y} \rangle$,
\[
\begin{split}
\partial_{x_i} H^{\phi, \psi}(\mb{x}, \mb{z})
& = \left( 1 - \mb{x} \cdot \mb{z} + \CumFun{\psi}{\mb{z}} \right)^{-1} z_i \left( 1 - \mb{y} \cdot \mb{z} + \CumFun{\psi}{\mb{z}} \right)^{-1} \left( 1 + \CumFun{\psi}{\mb{z}} - \CumFun{\phi, \psi}{\mb{z}} \right) \\
& = H^\psi(\mb{x}, \mb{z}) z_i H^{\phi, \psi}(\mb{y}, \mb{z}),
\end{split}
\]
where
\[
H^\psi(\mb{x}, \mb{z}) = \left( 1 - \mb{x} \cdot \mb{z} + \CumFun{\psi}{\mb{z}} \right)^{-1} = 1 + \sum_{\vec{u}} A^{\psi}_{\vec{u}} \left( \mb{x} \right) z_{\vec{u}}
\]
is the generating function for the free Appell polynomials of $\psi$. Also, making the change of variables \eqref{z-M-w} and using relation \eqref{C-cumulant-identity},
\begin{equation}
\label{H-reduction}
\begin{split}
\state{H^{\phi, \psi}(\mb{x}, \mb{z})}
& = \state{\left( 1 - \mb{x} \cdot \mb{z} + \CumFun{\psi}{\mb{z}} \right)^{-1}} \left( 1 + \CumFun{\psi}{\mb{z}} - \CumFun{\phi, \psi}{\mb{z}}  \right) \\
& = \state{\left( 1 - (1 + M^\psi(\mb{w})) \mb{x} \cdot \mb{w} + M^\psi(\mb{w}) \right)^{-1}} \\
& \quad \times (1 + M^\psi(\mb{w})) (1 + M^\phi(\mb{w}))^{-1} \\
& = \state{(1 - \mb{x} \cdot \mb{w})^{-1}} (1 + M^\psi(\mb{w}))^{-1} (1 + M^\psi(\mb{w})) (1 + M^\phi(\mb{w}))^{-1} \\
& = (1 + M^\phi(\mb{w})) (1 + M^\phi(\mb{w}))^{-1}= 1.
\end{split}
\end{equation}
It follows that the coefficients in the power series expansion of $H^{\phi, \psi}(\mb{x}, \mb{z})$ satisfy conditions \eqref{Appell-definition-1} and \eqref{Appell-definition-2}, which determine these coefficients uniquely.

\br
For part (b), we observe that
\[
\begin{split}
1 + \CumFun{\psi}{\mb{z}} - \CumFun{\phi, \psi}{\mb{z}}
& = \left( 1 - \mb{x} \cdot \mb{z} + \CumFun{\psi}{\mb{z}} \right) H^{\phi, \psi}(\mb{x}, \mb{z}) \\
& = \left( 1 - \mb{x} \cdot \mb{z} + \CumFun{\psi}{\mb{z}} \right) \left(1 + \sum_{\vec{u}} \Aa{\vec{u}}{\mb{x}} z_{\vec{u}} \right)
\end{split}
\]
or
\[
- \CumFun{\phi, \psi}{\mb{z}}
= - \sum_i x_i z_i + \sum_{\vec{u}} \Aa{\vec{u}}{\mb{x}} z_{\vec{u}} - \sum_{i, \vec{u}} x_i \Aa{\vec{u}}{\mb{x}} z_i z_{\vec{u}} + \sum_{\vec{u}} \Aa{\vec{u}}{\mb{x}} \CumFun{\psi}{\mb{z}} z_{\vec{u}}.
\]
Identifying coefficients of $z_i z_{\vec{u}}$ with $\abs{\vec{u}} = n$, we get
\[
x_i \Aa{\vec{u}}{\mb{x}}
= \Aa{(i, \vec{u})}{\mb{x}} + \sum_{j=0}^{n-1} \Cum{\psi}{x_i x_{u(1)} \ldots x_{u(j)}} \Aa{(u(j+1), \ldots, u(n))}{\mb{x}} + \Cum{\phi, \psi}{x_i x_{\vec{u}}}.
\]

\br
For part (c),
\[
H^{\phi, \psi}(\mb{x}, (1 + M^\psi(\mb{w})) \mb{w}) = (1 - \mb{x} \cdot \mb{w})^{-1} (1 + M^\phi(\mb{w}))^{-1},
\]
so that
\[
(1 - \mb{x} \cdot \mb{w})^{-1} = \left( 1 + \sum_{\vec{v}} \Aa{\vec{v}}{\mb{x}} [(1 + M^\psi(\mb{w})) \mb{w}]_{\vec{v}} \right) (1 + M^\phi(\mb{w})).
\]
It follows that for $\abs{\vec{u}} = n$,
\[
x_{\vec{u}} = \sum_{k=0}^n \sum_{\substack{B \subset \set{1, \ldots, n} \\ B = \set{i(1), \ldots, i(k)}}} \prod_{j=1}^k \psi \left[ x_{u(i(j-1) + 1)} \ldots x_{u(i(j) - 1)} \right] \state{x_{u(i(k) + 1)} \ldots x_{u(n)}} \Aa{(\vec{u}:B)}{\mb{x}}.
\]

\br
Part (d) is obtained by combining parts (c) and (d) of Proposition~\ref{Prop:Kailath-Segall}.
\end{proof}

\begin{Prop}
\label{Prop:c-Appell-characterization}
For any $\phi, \psi$, the c-free Appell polynomials are the unique polynomial family satisfying
\[
(\psi \otimes I) \partial_i P\left(X_1, X_2, \ldots, X_n\right)
= \delta_{i1} P\left(X_{2}, \ldots, X_n \right)
\]
and
\[
\state{P\left(X_1, X_2, \ldots, X_n \right)} = 0.
\]
\end{Prop}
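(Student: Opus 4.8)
The plan is to mirror the proof of Proposition~\ref{Prop:Appell-characterization}, interchanging the roles of the two tensor legs: here I apply $\psi$ to the \emph{left} factor rather than the right, so the combinatorics is reflected from the last index to the first, and the centering is taken with respect to $\phi$ rather than $\psi$.

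First I would verify that the c-free Appell polynomials satisfy the two conditions. The centering $\state{\A{\phi, \psi}{X_1, \ldots, X_n}} = 0$ is precisely \eqref{Appell-definition-2}. For the other condition, I apply $\psi \otimes I$ to the defining relation \eqref{Appell-definition-1}, obtaining
\[
(\psi \otimes I) \partial_i \A{\phi, \psi}{X_1, \ldots, X_n} = \psi\bigl[\A{\psi}{X_1, \ldots, X_{i-1}}\bigr]\, \A{\phi, \psi}{X_{i+1}, \ldots, X_n}.
\]
By \eqref{Free-Appell-definition-2} the scalar $\psi[\A{\psi}{X_1, \ldots, X_{i-1}}]$ vanishes as soon as $i - 1 \geq 1$, while for $i = 1$ the argument is the empty product $\A{\psi}{\emptyset} = 1$ and the scalar equals $1$. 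Hence the right-hand side is $\delta_{i1} \A{\phi, \psi}{X_2, \ldots, X_n}$, as required.

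For uniqueness it suffices, exactly as in Proposition~\ref{Prop:Appell-characterization}, to show that the linear map $P \mapsto \bigl((\psi \otimes I)\partial_1 P, \ldots, (\psi \otimes I)\partial_d P\bigr)$ has only constants in its kernel; the centering condition then removes the remaining scalar. The technical heart is the kernel claim, which I would establish by checking that the images of distinct non-constant monomials are linearly independent. Computing
\[
(\psi \otimes I) \partial_i (x_{\vec{u}}) = \sum_{p:\, u(p) = i} \psi\bigl[x_{u(1)} \ldots x_{u(p-1)}\bigr]\, x_{u(p+1)} \ldots x_{u(\abs{\vec{u}})},
\]
I take distinct $\vec{u}_1, \ldots, \vec{u}_k$, choose $i, j$ so that $(\psi \otimes I)\partial_i(x_{\vec{u}_j})$ has the highest degree $n = \abs{\vec{u}_j}$, and observe that this forces $u_j(1) = i$ and $\abs{\vec{u}_s} \leq n$ for all $s$. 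The split at the first position contributes the degree-$(n-1)$ monomial $x_{u_j(2)} \ldots x_{u_j(n)}$, with coefficient $\psi[1] = 1$, to the $i$-th component of the image of $x_{\vec{u}_j}$; since any image of degree $n-1$ must likewise arise from a first-position split, this monomial can occur in $(\psi \otimes I)\partial_i(x_{\vec{u}_s})$ only when $\vec{u}_s = \vec{u}_j$. Peeling off $\vec{u}_j$ and iterating gives the linear independence.

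Finally I would assemble uniqueness by induction on $\abs{\vec{u}}$: given two families satisfying the hypotheses, their difference $D_{\vec{u}}$ has $(\psi \otimes I)\partial_i D_{\vec{u}} = \delta_{i, u(1)} D_{(u(2), \ldots, u(n))} = 0$ by the inductive hypothesis, so $D_{\vec{u}}$ lies in the kernel and is therefore constant; since $\state{D_{\vec{u}}} = 0$ and $\phi$ is unital, that constant is $0$. The one structural difference from the free case---and the only place needing care---is that the difference-quotient side is controlled entirely by $\psi$ while $\phi$ enters solely through the centering step; because that step uses nothing about $\phi$ beyond unitality, the argument transfers without any genuine obstacle.
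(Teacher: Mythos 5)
Your proof is correct and follows essentially the paper's own route: the paper simply notes that the proof is similar to Proposition~\ref{Prop:Appell-characterization}, and your argument is exactly that mirroring (applying $\psi$ to the left tensor leg, so the degree bookkeeping tracks first-position rather than last-position splits, with $\phi$ entering only through the centering). Both the verification step via \eqref{Appell-definition-1}--\eqref{Appell-definition-2} and \eqref{Free-Appell-definition-2}, and the kernel/linear-independence argument, are sound.
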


\br
The proof is similar to Proposition~\ref{Prop:Appell-characterization}.

\begin{Cor}
The Boolean Appell polynomials of \cite{AnsBoolean} are, in agreement with part (b) of Example~\ref{Example:Conditional-freeness}, the c-free Appell polynomials corresponding to $\psi = \delta_0$.
\end{Cor}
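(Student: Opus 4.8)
The plan is to show that the defining conditions of the c-free Appell polynomials, specialized at $\psi = \delta_0$, reduce verbatim to the defining conditions of the Boolean Appell polynomials of \cite{AnsBoolean}, after which the equality follows from the uniqueness built into both definitions (the uniqueness being exactly the content of Proposition~\ref{Prop:c-Appell-characterization} and its free analogue).

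The key preliminary step is to identify the free Appell polynomials of $\delta_0$. Since $\delta_0[P] = P(0)$ by part (c) of Example~\ref{Example:Conditional-freeness}, every moment of $\delta_0$ past the zeroth vanishes, so $M^{\delta_0}(\mb{z}) = 0$ and hence $\CumFun{\delta_0}{\mb{z}} = 0$. Feeding this into the free Appell generating function $H^{\delta_0}(\mb{x}, \mb{z}) = \left(1 - \mb{x} \cdot \mb{z} + \CumFun{\delta_0}{\mb{z}}\right)^{-1} = \left(1 - \mb{x} \cdot \mb{z}\right)^{-1} = \sum_{\vec{u}} x_{\vec{u}} z_{\vec{u}}$ shows that $A^{\delta_0}_{\vec{u}}(\mb{x}) = x_{\vec{u}}$; equivalently, one checks directly that the monomials satisfy $\partial_i \left(X_1 \cdots X_n\right) = \left(X_1 \cdots X_{i-1}\right) \otimes \left(X_{i+1} \cdots X_n\right)$ together with $\delta_0[X_1 \cdots X_n] = 0$, and appeals to uniqueness of the free Appell family.

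With $\A{\delta_0}{X_1, \ldots, X_{i-1}} = X_1 \cdots X_{i-1}$ in hand, substituting $\psi = \delta_0$ into \eqref{Appell-definition-1} turns it into $\partial_i \A{\phi, \delta_0}{X_1, \ldots, X_n} = \left(X_1 \cdots X_{i-1}\right) \otimes \A{\phi, \delta_0}{X_{i+1}, \ldots, X_n}$, while \eqref{Appell-definition-2} remains $\state{\A{\phi, \delta_0}{X_1, \ldots, X_n}} = 0$. These are precisely the relations defining the Boolean Appell polynomials in \cite{AnsBoolean}, so uniqueness identifies the two families. As a cross-check consistent with part (b) of Example~\ref{Example:Conditional-freeness}, where $R^{\phi, \delta_0} = \eta^\phi$ and $R^{\delta_0} = 0$, the generating function of Proposition~\ref{Prop:Recursions}(a) collapses to $H^{\phi, \delta_0}(\mb{x}, \mb{z}) = \left(1 - \mb{x} \cdot \mb{z}\right)^{-1}\left(1 - \eta^\phi(\mb{z})\right) = \left(1 - \mb{x} \cdot \mb{z}\right)^{-1}\left(1 + M^\phi(\mb{z})\right)^{-1}$, which is the Boolean Appell generating function.

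I expect the only real obstacle to be bookkeeping: confirming that the recursion recorded in \cite{AnsBoolean} is written in exactly this form, with the bare monomial on the left tensor leg and the Appell polynomial on the right, so that the match is literal. If instead the Boolean family there is defined through its explicit moment-cumulant formula, I would verify the equality by substituting $\CumFun{\delta_0}{\mb{z}} = 0$ and $\CumFun{\phi, \delta_0}{\mb{z}} = \eta^\phi(\mb{z})$ into Proposition~\ref{Prop:Recursions}(d): every inner factor $\Cum{\delta_0}{\cdot}$ then vanishes and every outer factor $\Cum{\phi, \delta_0}{\cdot}$ becomes a Boolean cumulant, which is exactly the Boolean explicit formula.
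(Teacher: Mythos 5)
Your proof is correct, but it takes a slightly different route than the paper's. The paper does not specialize the defining relation \eqref{Appell-definition-1}; instead it starts from the characterization in Proposition~\ref{Prop:c-Appell-characterization}, and its single computation concerns the \emph{operator} rather than the polynomials: for any monomial, $(\delta_0 \otimes I)\,\partial_i = D_i$, so the conditions $(\delta_0 \otimes I)\,\partial_i P = \delta_{i1} P(X_2, \ldots, X_n)$ and $\state{P} = 0$ become verbatim the $D_i$-based definition of the Boolean Appell polynomials in Section~3.2 of \cite{AnsBoolean}. You instead compute that the free Appell polynomials of $\delta_0$ are the monomials, $\A{\delta_0}{X_1, \ldots, X_{i-1}} = X_1 \cdots X_{i-1}$ (correct, since $\CumFun{\delta_0}{\mb{z}} = 0$ gives $H^{\delta_0}(\mb{x}, \mb{z}) = (1 - \mb{x} \cdot \mb{z})^{-1}$), and so specialize \eqref{Appell-definition-1} to the tensor-form recursion $\partial_i \A{\phi, \delta_0}{X_1, \ldots, X_n} = (X_1 \cdots X_{i-1}) \otimes \A{\phi, \delta_0}{X_{i+1}, \ldots, X_n}$. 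The one soft spot is exactly the bookkeeping issue you flag: the definition in \cite{AnsBoolean} is phrased through $D_i$, not through the full difference quotient --- this is precisely why the paper's proof bothers to identify $(\delta_0 \otimes I)\,\partial_i$ with $D_i$ --- so your claimed literal match does not quite hold. But the gap is one line: applying $\delta_0 \otimes I$ to your tensor-form relation yields $D_i \A{\phi, \delta_0}{X_1, \ldots, X_n} = \delta_{i1} \A{\phi, \delta_0}{X_2, \ldots, X_n}$, and the uniqueness built into the Boolean definition finishes the identification; alternatively, your generating-function fallback $H^{\phi, \delta_0}(\mb{x}, \mb{z}) = (1 - \mb{x} \cdot \mb{z})^{-1} \left(1 - \eta^\phi(\mb{z})\right)$ is a complete argument on its own, since that is the Boolean Appell generating function established in \cite{AnsBoolean}. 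What your route buys is a stronger intermediate statement (you identify the entire difference quotient of $A^{\phi,\delta_0}$, not just its left $\delta_0$-evaluation, and along the way record that $A^{\delta_0}_{\vec{u}}(\mb{x}) = x_{\vec{u}}$); what the paper's route buys is brevity, landing on the cited definition with no bridging step.
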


\begin{proof}
For any monomial,
\[
\begin{split}
(\delta_0 \otimes I) \partial_i (X_{u(1)} X_{u(2)} \ldots X_{u(n)})
& = \sum_{j: u(j) = i} (\delta_0 \otimes I) \bigl((X_{u(1)} \ldots X_{u(j-1)}) \otimes (X_{u(j+1)} \ldots X_{u(n)})\bigr) \\
& = \delta_{i, u(1)} X_{u(2)} \ldots X_{u(n)}
= D_i (X_{u(1)} X_{u(2)} \ldots X_{u(n)}).
\end{split}
\]
So polynomials satisfying the conditions in the preceding proposition with $\psi = \delta_0$ are exactly those satisfying the definition in Section 3.2 of \cite{AnsBoolean}.
\end{proof}

\subsection{Orthogonality}
\label{Subsection:Orthogonal}
The argument in equation~\eqref{H-reduction} shows that for any $\psi$, the c-free Appell polynomials are also Boolean (and hence free) Sheffer polynomials for $\phi$:
\[
H^{\phi, \psi}(\mb{x}, \mb{z}) = (1 - \mb{x} \cdot \mb{V}(\mb{z}))^{-1} (1 - \eta^\phi(\mb{V}(\mb{z}))) ,
\]
where
\[
V_i(\mb{z}) = (1 + \CumFun{\psi}{\mb{z}})^{-1} z_i.
\]
In one variable, every Boolean Sheffer family of $\phi$ is of this form, but this is not the case in several variables because of the special form of the series $\mb{V}$. By Proposition~7 of \cite{AnsBoolean}, these polynomials are orthogonal if and only if
\[
(D_i \eta^\phi)(\mb{V}(\mb{z})) = z_i
\]
and $\phi$ is a free Meixner state. Lemma~\ref{Lemma:Quadratic} and Theorem~\ref{Thm:Quadratic} below separate these two conditions and describe a number of other properties equivalent to them. The orthogonality is with respect to the state $\phi$.

\begin{Lemma}
\label{Lemma:Quadratic}
The following are equivalent.
\begin{enumerate}
\item
The two-state free cumulant generating function of $(\phi, \psi)$ is simple quadratic:
\[
\CumFun{\phi, \psi}{\mb{z}} = \sum_{i=1}^d z_i^2.
\]
\item
$\phi = \State{\psi}$.
\item
All of the c-free Appell polynomials for $(\phi, \psi)$  are orthogonal to the degree one c-free Appell polynomials, and $\phi$ has mean zero and identity covariance.
\end{enumerate}
\end{Lemma}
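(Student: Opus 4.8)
The plan is to prove (a) $\Leftrightarrow$ (b) analytically from the cumulant relation \eqref{eta-M-R}, and (a) $\Leftrightarrow$ (c) combinatorially from the recursion of Proposition~\ref{Prop:Recursions}(b). In both halves the unifying idea is to convert a condition on $\phi$ (or on the orthogonality of its Appell polynomials) into the single statement that the two-state cumulant generating function $\CumFun{\phi,\psi}{\mb{z}}$ equals $\sum_i z_i^2$.

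For (a) $\Leftrightarrow$ (b) I would feed the change of variables $z_i = (1 + M^\psi(\mb{w})) w_i$ from \eqref{z-M-w} into \eqref{eta-M-R}. Assuming (a), one has $\CumFun{\phi,\psi}{(1 + M^\psi(\mb{w}))\mb{w}} = \sum_i (1 + M^\psi(\mb{w})) w_i (1 + M^\psi(\mb{w})) w_i$, so \eqref{eta-M-R} collapses to $\eta^\phi(\mb{w}) = \sum_i w_i (1 + M^\psi(\mb{w})) w_i$, which is exactly the defining equation of $\eta^{\State{\psi}}$. Since $\eta^\phi$ determines $M^\phi$ and hence $\phi$, this yields $\phi = \State{\psi}$; running the same computation in reverse, and using that the substitution \eqref{z-M-w} is invertible, recovers (a) from (b).

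For (a) $\Leftrightarrow$ (c) the crucial step is an inner-product formula obtained by applying $\phi$ to the recursion in Proposition~\ref{Prop:Recursions}(b) with $X = x_i$ and $X_1, \ldots, X_n$ the letters of $\vec{u}$. Every free-Appell factor $\A{\psi}{X_{j+1}, \ldots, X_n}$ appearing for $j \leq n-1$ has a nonempty argument and so vanishes under $\phi$, as does the leading term $\A{\phi,\psi}{x_i, x_{\vec{u}}}$; only the final summand survives, giving
\[
\state{x_i \Aa{\vec{u}}{\mb{x}}} = \Cum{\phi, \psi}{x_i, x_{u(1)}, \ldots, x_{u(n)}}.
\]
Under the mean-zero hypothesis $\Cum{\phi,\psi}{x_i} = \state{x_i} = 0$, so the degree-one c-free Appell polynomials are simply $\Aa{(i)}{\mb{x}} = x_i$; hence orthogonality of $\Aa{\vec{u}}{\mb{x}}$ with $\abs{\vec{u}} \geq 2$ to every degree-one polynomial is precisely the vanishing of $\Cum{\phi,\psi}{x_i, x_{u(1)}, \ldots, x_{u(n)}}$ for all $i$ and all $\abs{\vec{u}} \geq 2$, that is, the vanishing of every two-state cumulant of order $\geq 3$. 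The first-order cumulants vanish by mean zero, and the non-crossing moment-cumulant expansion reduces at order two to $\state{x_i x_j} = \Cum{\phi,\psi}{x_i, x_j}$ once the outer singleton cumulants are known to vanish, so identity covariance is equivalent to $\Cum{\phi,\psi}{x_i, x_j} = \delta_{ij}$. Assembling the three orders shows (c) holds if and only if $\CumFun{\phi,\psi}{\mb{z}} = \sum_i z_i^2$, i.e.\ (a).

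I expect the inner-product identity and its bookkeeping to be the main obstacle. One must verify that the recursion genuinely collapses to a single cumulant term — that no empty-argument free-Appell polynomial slips into the surviving sum — and then check that orthogonality to degree one captures exactly the cumulants of order $\geq 3$, since every such cumulant $\Cum{\phi,\psi}{x_{w(1)}, \ldots, x_{w(k)}}$ with $k \geq 3$ arises as $\state{x_{w(1)} \Aa{(w(2), \ldots, w(k))}{\mb{x}}}$ with $\abs{(w(2), \ldots, w(k))} \geq 2$. The remaining care lies in extracting the low-order cumulant conditions from mean zero and identity covariance through the non-crossing expansion, where the vanishing of the outer singletons is exactly what forces the second moments and second cumulants to agree.
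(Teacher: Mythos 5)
Your proposal is correct and follows essentially the same route as the paper's proof: (a)$\Leftrightarrow$(b) by substituting the change of variables \eqref{z-M-w} into \eqref{eta-M-R} and reading off the defining equation of $\State{\psi}$, and (a)$\Leftrightarrow$(c) by applying $\phi$ to the recursion of Proposition~\ref{Prop:Recursions}(b) to obtain $\state{x_i \Aa{\vec{u}}{\mb{x}}} = \Cum{\phi,\psi}{x_i, x_{\vec{u}}}$ and then matching cumulants order by order. One notational slip worth fixing: the factors that vanish under $\phi$ in that recursion are the c-free Appell polynomials $\A{\phi,\psi}{X_{j+1}, \ldots, X_n}$, which are $\phi$-centered by definition \eqref{Appell-definition-2}, not free Appell polynomials $\A{\psi}{\cdot}$, which are only $\psi$-centered; your collapse of the sum is valid precisely because the recursion's factors are the former.
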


\begin{proof}
\[
\sum_{i=1}^d (1 + M^\psi(\mb{w})) w_i (1 + M^\psi(\mb{w})) w_i = \CumFun{\phi, \psi}{(1 + M^\psi(\mb{w})) \mb{w}} = (1 + M^\psi(\mb{w})) \eta^\phi(\mb{w})
\]
if and only if
\[
\eta^\phi(\mb{w}) = \sum_{i=1}^d w_i (1 + M^\psi(\mb{w})) w_i
\]
so that $\phi = \State{\psi}$. Thus (a) $\Leftrightarrow$ (b).

\br
It follows from the recursion relation in part (b) of Proposition~\ref{Prop:Recursions} that
\[
\begin{split}
\state{\A{\phi, \psi}{X} \A{\phi, \psi}{X_1, X_2, \ldots, X_n}}
& = \state{(X - \state{X}) \A{\phi, \psi}{X_1, X_2, \ldots, X_n}} \\
& = \Cum{\phi, \psi}{X, X_1, \ldots, X_n}.
\end{split}
\]
So the degree one c-free Appell polynomials are orthogonal to the rest if and only if
\[
\Cum{\phi, \psi}{x_{u(1)}, x_{u(2)}, \ldots, x_{u(n)}} = 0
\]
for $n > 2$ and $\Cum{\phi, \psi}{x_i, x_j} = 0$ for $i \neq j$, so that $\CumFun{\phi, \psi}{\mb{z}} = \sum_{i=1}^d (a_i z_i + b_i z_i^2)$. The normalization of $\phi$ forces $\CumFun{\phi, \psi}{\mb{z}} = \sum_{i=1}^d z_i^2$.
\end{proof}

\begin{Remark}
A free version of the mapping $\Phi$ was considered in the last section of \cite{AnsMulti-Sheffer}; the states described there are the image under the appropriate Bercovici-Pata bijection of the states in the next theorem.
\end{Remark}

\begin{Thm}
\label{Thm:Quadratic}
Let $\phi, \psi$ be states with MOPS, $\phi$ with mean zero and identity covariance. The c-free Appell polynomials $A^{\phi, \psi}$ are orthogonal if and only if either of the following equivalent conditions holds:
\begin{enumerate}
\item
$R^{\phi, \psi}$ and $R^\psi$ are both quadratic,
\[
\CumFun{\phi, \psi}{\mb{z}} = \sum_{i=1}^d z_i^2
\]
and
\[
\CumFun{\psi}{\mb{z}} = \sum_{i=1}^d b_i z_i + \sum_{i=1}^d (1 + c_i) z_i^2.
\]
\item
$\psi$ is the joint distribution of freely independent semicircular elements with means $b_i$ and variances $1 + c_i$, $b_i \in \mf{R}$, $c_i \geq -1$, and
\[
\phi = \State{\psi}.
\]
\end{enumerate}
In any case, $\phi$ is a free Meixner state, and
\[
D_i D_j \eta^\phi = \delta_{ij} + b_i D_j \eta^\phi + (1 + c_i) D_i \eta^\phi D_j \eta^\phi.
\]
\end{Thm}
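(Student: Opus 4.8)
The plan is to start from the orthogonality criterion recalled just before Lemma~\ref{Lemma:Quadratic}: since $H^{\phi,\psi}(\mb{x},\mb{z})$ exhibits the c-free Appell polynomials as Boolean Sheffer polynomials for $\phi$, Proposition~7 of \cite{AnsBoolean} says they are orthogonal exactly when the two conditions $(D_i \eta^\phi)(\mb{V}(\mb{z})) = z_i$ and ``$\phi$ is a free Meixner state'' both hold, where $V_i(\mb{z}) = (1 + \CumFun{\psi}{\mb{z}})^{-1} z_i$. The whole task is therefore to identify each of these two conditions with a condition on the pair $(\phi,\psi)$ and to assemble them into (a), (b), and the final PDE.

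First I would dispose of the condition $(D_i\eta^\phi)(\mb{V}(\mb{z})) = z_i$. Under the change of variables \eqref{z-M-w} one has $V_i(\mb{z}) = w_i$ and $z_i = (1 + M^\psi(\mb{w}))w_i$, so this condition reads $D_i\eta^\phi(\mb{w}) = (1 + M^\psi(\mb{w}))w_i$. Because $\eta^\phi$ has no constant term, the identity $\eta^\phi = \sum_i w_i\, D_i\eta^\phi$ holds, and ``integrating'' the previous display gives $\eta^\phi(\mb{w}) = \sum_i w_i (1 + M^\psi(\mb{w})) w_i$, which is precisely $\phi = \State{\psi}$; the converse is the one-line computation $D_i\bigl(\sum_j w_j(1+M^\psi)w_j\bigr) = (1+M^\psi)w_i$. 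Thus the first condition is equivalent to $\phi = \State{\psi}$, hence by Lemma~\ref{Lemma:Quadratic} to $\CumFun{\phi,\psi}{\mb{z}} = \sum_i z_i^2$.

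Next, assuming $\phi = \State{\psi}$, I would translate ``$\phi$ is free Meixner'' into a condition on $\CumFun{\psi}{\mb{z}}$. Writing $\Xi = 1 + M^\psi$ and using the left-derivation product rule $D_i(FG) = (D_iF)G + F(0)\,D_iG$, the relation $D_j\eta^\phi = \Xi w_j$ gives $D_iD_j\eta^\phi = \delta_{ij} + (D_iM^\psi)\,w_j$. Substituting into the Boolean free Meixner equation \eqref{Boolean-quadratic-PDE} yields
\[
(D_iM^\psi)\,w_j = \sum_k B_{ij}^k\, \Xi w_k + (1 + C_{ij})\,\Xi w_i \Xi w_j .
\]
Matching degree-one parts forces $B_{ij}^k = b_i\delta_{jk}$ with $b_i = \psi[x_i]$; cancelling the right factor $w_j$ (right multiplication is injective on power series) and observing that the left-hand side is independent of $j$ forces $C_{ij}=c_i$, leaving
\[
D_iM^\psi = b_i(1 + M^\psi) + (1 + c_i)(1 + M^\psi)\,w_i\,(1 + M^\psi) .
\]
Applying $\sum_i w_i D_i = \mathrm{id}$ converts this into
\[
M^\psi = \sum_i b_i w_i(1 + M^\psi) + \sum_i (1 + c_i) w_i(1 + M^\psi)w_i(1 + M^\psi),
\]
which is exactly the moment--cumulant equation \eqref{R-w-M} for $\CumFun{\psi}{\mb{z}} = \sum_i b_i z_i + \sum_i (1 + c_i)z_i^2$; since \eqref{R-w-M} determines the cumulant function uniquely from $M^\psi$, this gives condition (a). Running the same computation in reverse, by differentiating the quadratic form of $M^\psi$, recovers the displayed PDE $D_iD_j\eta^\phi = \delta_{ij} + b_i D_j\eta^\phi + (1+c_i)D_i\eta^\phi D_j\eta^\phi$, so $\phi$ is free Meixner and the final formula holds.

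Finally, (a) $\Leftrightarrow$ (b) is the standard cumulant characterization: $\CumFun{\psi}{\mb{z}} = \sum_i b_i z_i + \sum_i(1 + c_i)z_i^2$ has vanishing mixed and higher free cumulants, which is exactly freeness of semicircular coordinates with means $b_i$ and variances $1 + c_i$; positivity of $\psi$ gives $b_i\in\mf{R}$ and $c_i \ge -1$. The main obstacle is not conceptual but the non-commutative bookkeeping in the middle step: correctly applying the left-derivation product rule, justifying the right-cancellation of $w_j$ to pass from the two-index relation to the single relation for $D_iM^\psi$, and recognizing the integrated identity as the moment--cumulant relation so that uniqueness pins down $\CumFun{\psi}{\mb{z}}$.
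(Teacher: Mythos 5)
Your proof is correct and follows essentially the same route as the paper's: both hinge on Proposition~7 of \cite{AnsBoolean} applied to the Boolean Sheffer form of $H^{\phi,\psi}$, the identification of $(D_i\eta^\phi)(\mb{V}(\mb{z}))=z_i$ with $\phi=\State{\psi}$, and the computation $D_iD_j\eta^\phi = \delta_{ij} + (D_iM^\psi)w_j$ matched against equation~\eqref{Boolean-quadratic-PDE} to force $\CumFun{\psi}{\mb{z}}$ to be diagonal quadratic. The only differences are cosmetic: the paper additionally runs a parallel argument via the three-term-recursion criterion (Theorem~2 of \cite{AnsMonic} compared with Proposition~\ref{Prop:Recursions}(b)), and where you integrate $D_iM^\psi$ by $\sum_i w_iD_i$ and invoke uniqueness in \eqref{R-w-M}, the paper reads off $D_i\CumFun{\psi}{\mb{z}}$ directly from the differentiated relation $D_iM^\psi(\mb{w}) = (1+M^\psi(\mb{w}))\,(D_iR^\psi)\bigl(\mb{w}(1+M^\psi(\mb{w}))\bigr)$.
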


\begin{proof}
The main result (Theorem 2) of \cite{AnsMonic} states that polynomials with respect to a state with MOPS are orthogonal if any only if they satisfy a recursion relation involving polynomials of only three consecutive total degrees. Comparing that result with the recursion in part (b) of Proposition~\ref{Prop:Recursions}, this is the case exactly when
\[
\Cum{\phi, \psi}{X_1, X_2, \ldots, X_n} = \Cum{\psi}{X_1, X_2, \ldots, X_n} = 0
\]
for $n \geq 3$, so that both the cumulant functions are quadratic.

\br
For the change of variable \eqref{z-M-w}, the generating function for the c-free Appell polynomials is
\[
H^{\phi, \psi}(\mb{x}, \mb{z}) = (1 - \mb{x} \cdot \mb{w})^{-1} (1 + M^\phi(\mb{w}))^{-1} = (1 - \mb{x} \cdot \mb{w})^{-1} (1 - \eta^\phi(\mb{w})).
\]
By Proposition 7 of \cite{AnsBoolean}, the polynomials with such a generating function are orthogonal if and only if $\phi$ is a free Meixner state and
\[
D_i \eta^\phi(\mb{w}) = z_i.
\]
That is,
\[
\eta^\phi(\mb{w}) = \sum_i w_i z_i = \sum_i w_i (1 + M^\psi(\mb{w})) w_i
\]
and $\phi = \State{\psi}$.

\br
It remains to show that the only $\psi$ such that $\State{\psi}$ is a free Meixner state are the ones in the statement of the theorem. Indeed,
\[
D_j \eta^\phi(\mb{w}) = (1 + M^\psi(\mb{w})) w_j
\]
and so
\[
D_i D_j \eta^\phi(\mb{w}) = \delta_{ij} + D_i M^\psi(\mb{w}) w_j.
\]
On the other hand,
\begin{equation}
\label{D-eta-D-R-M-w}
D_i \eta^{\phi}(\mb{w})
= D_i \Bigl( (1 + M^\psi(\mb{w}))^{-1} \CumFun{\phi, \psi}{(1 + M^\psi(\mb{w})) \mb{w}} \Bigr)
= (D_i R^{\phi, \psi}) \left((1 + M^\psi(\mb{w})) \mb{w}\right)
\end{equation}
and using equation~\eqref{R-w-M},
\[
D_i M^\psi(\mb{w}) = (1 + M^\psi(\mb{w})) (D_i R^\psi) \left(\mb{w} (1 + M^\psi(\mb{w}))\right).
\]
It follows that if $\phi$ is a free Meixner state, with $\eta^\phi$ satisfying equation~\eqref{Boolean-quadratic-PDE}, then
\[
\begin{split}
D_i D_j \eta^\phi(\mb{w})
& = \delta_{ij} + D_i M^\psi(\mb{w}) w_j \\
& = \delta_{ij} + \sum_k B_{ij}^k (1 + M^\psi(\mb{w})) w_k + (1 + C_{ij}) (1 + M^\psi(\mb{w})) w_i (1 + M^\psi(\mb{w})) w_j \\
& = \delta_{ij} + (1 + M^\psi(\mb{w})) (D_i R^\psi) \left(\mb{w} (1 + M^\psi(\mb{w}))\right) w_j.
\end{split}
\]
Therefore for any $i, j$,
\[
\sum_k B_{ij}^k w_k + (1 + C_{ij}) w_i (1 + M^\psi(\mb{w})) w_j
= (D_i R^\psi) \left(\mb{w} (1 + M^\psi(\mb{w}))\right) w_j.
\]
It follows that $B_{ij}^k = \delta_{jk} b_{ij}$ and for all $j$,
\[
b_{ij} + (1 + C_{ij}) z_i
= D_i \CumFun{\psi}{\mb{z}}
\]
for $z_i = w_i (1 + M^\psi(\mb{w}))$, so that
\[
\CumFun{\psi}{\mb{z}} = \sum_i b_{ij} z_i + \sum_i (1 + C_{ij}) z_i^2.
\]
This is true for any $j$, therefore $b_{ij} = b_i$, $C_{ij} = c_i$, and finally
\[
\CumFun{\psi}{\mb{z}} = \sum_i \left(b_i z_i + (1 + c_i) z_i^2 \right),
\]
so that $\psi$ is a free product of semicircular distributions with means $b_i$ and variances $(1 + c_i)$.
\end{proof}

\begin{Example}
\label{Example:Appell-second-kind}
Similarly to Proposition~\ref{Prop:c-Appell-characterization}, the c-free Appell polynomials satisfy, and are characterized by, the properties
\[
(I \otimes \phi) \partial_i \A{\phi, \psi}{X_1, X_2, \ldots, X_n} = \delta_{i n} \A{\psi}{X_1, \ldots, X_{n-1}}
\]
and
\[
\state{\A{\phi, \psi}{X_1, X_2, \ldots, X_n}} = 0.
\]
In particular, if $\A{\phi, \psi}{X_1, X_2, \ldots, X_n}$ are orthogonal with respect to $\phi$, then $\A{\psi}{X_1, \ldots, X_n}$ are the corresponding orthogonal polynomials of the second kind. Theorem~\ref{Thm:Quadratic}, combined with Proposition 3.18 of \cite{AnsAppell}, shows that the free Appell polynomials $A^{\psi}$ are orthogonal if and only if $\psi$ is a free product of semicircular distributions. Thus, $A^\psi$ are orthogonal if and only if they are the polynomials of the second kind for $\phi$ and $A^{\phi, \psi}$ are orthogonal.
\end{Example}

\begin{Remark}[Conjugate variables in two-state free probability theory]
For
\[
X_1, X_2, \ldots, X_d \in (\mc{A}, \phi, \psi),
\]
we can define their (formal) c-free conjugate variables $\xi_i$ via
\[
(\psi \otimes \phi) [\partial_i P(X_1, X_2, \ldots, X_d)] = \state{\xi_i P(X_1, X_2, \ldots, X_d)}.
\]
In particular
\[
\state{\xi_i \A{\phi, \psi}{X_1, X_2, \ldots, X_d}} = (\psi \otimes \phi) [\partial_i \A{\phi, \psi}{X_1, X_2, \ldots, X_d}] = \delta_{i 1} \delta_{d 1}
\]
and more generally
\[
\state{\xi_i A^{X_1, X_2, \ldots, X_d}_{\vec{u}}(X_1, X_2, \ldots, X_d)} = \delta_{i, \vec{u}}.
\]
In one variable, if $\state{p} = \int_{\mf{R}} p(x) \,d\mu(x)$, $\psi[p] = \int_{\mf{R}} p(x) \,d\nu(x)$,
under appropriate conditions on $\mu, \nu$
\[
\xi(x) = \pi \left( H\nu(x) + H\mu(x) \frac{d\nu}{d\mu}(x) \right),
\]
where $H\nu(x) = \frac{1}{\pi} \int_{\mf{R}} \frac{1}{x - y} \,d\nu(y)$ is the Hilbert transform. Without a random matrix connection, the use of these objects at this point is unclear, so we list only one basic result. It elucidates the connection between minimization of the free Fisher information on free semicircular variables, and the orthogonality of the Chebyshev polynomials. In particular, it implies a version of Proposition~6.9 of \cite{Voi-Entropy5}.
\end{Remark}

\begin{Prop}
\label{Prop:Fisher}
$X_1, X_2, \ldots, X_d \in (\mc{A}, \phi, \psi)$ with well-defined formal c-free conjugate variables satisfy
\[
\state{\sum_{i=1}^d \xi_i^2} \state{\sum_{i=1}^d X_i^2} \geq d^2.
\]
Denoting their joint distributions $\mu = \phi^{X_1, \ldots, X_d}$ and $\nu = \psi^{X_1, \ldots, X_d}$, the equality is achieved exactly for $\mu = \State{\nu}^{\uplus \lambda}$ (here $\uplus$ is the Boolean convolution, see Section~\ref{Subsection:Convolutions}).
\end{Prop}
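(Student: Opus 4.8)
The inequality is Cauchy--Schwarz, so the plan is first to identify the relevant inner products. Taking $P = x_j$ (a degree-one polynomial) in the defining relation for the conjugate variables gives $\state{\xi_i X_j} = (\psi \otimes \phi)[\partial_i x_j] = \delta_{ij}$, since $\partial_i x_j = \delta_{ij}\, 1 \otimes 1$. Regarding $\Xi = (\xi_1, \ldots, \xi_d)$ and $\mb{X} = (X_1, \ldots, X_d)$ as vectors in the $d$-fold direct sum of the GNS space $L^2(\mc{A}, \phi)$, with $\ip{(a_i)}{(b_i)} = \sum_i \state{a_i b_i^\ast}$, self-adjointness of the $X_i$ (and $\xi_i$) then yields $\ip{\Xi}{\mb{X}} = \sum_i \state{\xi_i X_i} = d$, together with $\norm{\Xi}^2 = \state{\sum_i \xi_i^2}$ and $\norm{\mb{X}}^2 = \state{\sum_i X_i^2}$. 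Cauchy--Schwarz immediately gives $d^2 = \abs{\ip{\Xi}{\mb{X}}}^2 \leq \state{\sum_i \xi_i^2}\, \state{\sum_i X_i^2}$.

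For the equality case, Cauchy--Schwarz is sharp precisely when $\Xi$ and $\mb{X}$ are parallel, i.e.\ $\xi_i = c X_i$ in $L^2(\phi)$ for a single scalar $c$, necessarily $c = d / \state{\sum_i X_i^2} > 0$. The real content of the proof is to translate this proportionality into the asserted relation $\mu = \State{\nu}^{\uplus \lambda}$ between distributions. To that end I would introduce the generating function $\Xi_i(\mb{w}) = \state{\xi_i (1 - \mb{X} \cdot \mb{w})^{-1}} = \sum_{\vec{u}} \state{\xi_i X_{\vec{u}}}\, w_{\vec{u}}$ and compute it from the definition of $\xi_i$: applying $\partial_i$ to a monomial splits it at each occurrence of the letter $i$, so that $\state{\xi_i X_{\vec{u}}}$ is a sum over such splittings of $\psi[\text{prefix}]\,\phi[\text{suffix}]$, and this sum factorizes as
\[
\Xi_i(\mb{w}) = (1 + M^\psi(\mb{w}))\, w_i\, (1 + M^\phi(\mb{w})).
\]

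On the other hand, the equality condition $\xi_i = c X_i$ says exactly that $\Xi_i(\mb{w}) = c\, G_i(\mb{w})$, where $G_i(\mb{w}) = \state{X_i (1 - \mb{X} \cdot \mb{w})^{-1}}$ satisfies $\sum_i w_i G_i(\mb{w}) = M^\phi(\mb{w})$. Left-multiplying both descriptions of $\Xi_i$ by $w_i$ and summing over $i$, I recognize $\sum_i w_i (1 + M^\psi(\mb{w})) w_i = \eta^{\State{\psi}}(\mb{w})$ (the Belinschi--Nica formula) and $M^\phi = \eta^\phi (1 + M^\phi)$, obtaining $\eta^{\State{\psi}}(\mb{w})\, (1 + M^\phi(\mb{w})) = c\, \eta^\phi(\mb{w})\, (1 + M^\phi(\mb{w}))$; cancelling the invertible factor $(1 + M^\phi)$ on the right gives $\eta^{\State{\psi}} = c\, \eta^\phi$, that is $\eta^\phi = \lambda\, \eta^{\State{\psi}}$ with $\lambda = c^{-1}$. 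Since the $\lambda$-fold Boolean convolution power is defined by $\eta^{\rho^{\uplus \lambda}} = \lambda\, \eta^\rho$, this is precisely $\mu = \State{\nu}^{\uplus \lambda}$. Each step reverses: uniqueness of the ``first-letter'' decomposition $F(\mb{w}) = \sum_i w_i F_i(\mb{w})$ lets me pass back from $\sum_i w_i \Xi_i = \sum_i w_i (\lambda^{-1} G_i)$ to $\Xi_i = \lambda^{-1} G_i$, hence to $\xi_i = \lambda^{-1} X_i$ and equality in Cauchy--Schwarz.

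The inequality itself is routine; the main obstacle is the equality analysis, and within it the factorization of $\Xi_i(\mb{w})$ together with the cancellation argument that converts proportionality of the conjugate variables into proportionality of $\eta^\phi$ and $\eta^{\State{\psi}}$. One should also check at the outset that $\State{\nu}^{\uplus \lambda}$ is a legitimate state for $\lambda \geq 0$ (Boolean infinite divisibility), so that the characterization makes sense.
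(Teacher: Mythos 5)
Your argument is correct, and while your inequality half is essentially the paper's (both are Cauchy--Schwarz), your equality analysis takes a genuinely different route. The paper pairs $(\xi_1,\ldots,\xi_d)$ with the centered variables $\A{\phi,\psi}{X_i}=X_i-\state{X_i}$, so its chain reads $\state{\sum_i\xi_i^2}\,\state{\sum_iX_i^2}\geq\state{\sum_i\xi_i^2}\,\sum_i\Var{X_i}\geq d^2$; you pair directly with the $X_i$ via $\state{\xi_iX_j}=\delta_{ij}$, and the two equality conditions coincide because $\state{\xi_i}=0$ forces the $X_i$ to be $\phi$-centered once $\xi_i=cX_i$. For the equality case the paper stays inside its cumulant machinery: proportionality of $(\xi_i)$ to the degree-one c-free Appell polynomials, tested against all higher-degree c-free Appell polynomials through the identity $\state{\A{\phi,\psi}{X}\A{\phi,\psi}{X_1,\ldots,X_n}}=\Cum{\phi,\psi}{X,X_1,\ldots,X_n}$ from the proof of Lemma~\ref{Lemma:Quadratic}, forces $\CumFun{\phi,\psi}{\mb{z}}=\lambda\sum_iz_i^2$, and equation~\eqref{eta-M-R} then converts this to $\eta^\mu=\lambda\,\eta^{\State{\nu}}$. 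You bypass the Appell polynomials and the cumulants altogether: your factorization $\Xi_i(\mb{w})=(1+M^\nu(\mb{w}))\,w_i\,(1+M^\mu(\mb{w}))$ is correct (it is in effect the multivariate form of the Hilbert-transform formula in the Remark preceding the proposition), and combined with $\sum_iw_iG_i(\mb{w})=M^\mu(\mb{w})=\eta^\mu(\mb{w})(1+M^\mu(\mb{w}))$, the Belinschi--Nica formula, and uniqueness of the first-letter decomposition, it gives $\eta^\mu=\lambda\,\eta^{\State{\nu}}$ directly; from that identity onward the two proofs finish identically. What each buys: yours is self-contained at the level of moment series and makes the conjugate-variable generating function explicit, while the paper's exhibits the link to the quadratic form of $R^{\phi,\psi}$, i.e.\ to condition (d) of the Remark following Theorem~\ref{Thm:OPS-second-kind}, which is the conceptual reason condition (f) is listed there. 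One caveat common to both proofs: the direction ``$\mu=\State{\nu}^{\uplus\lambda}$ implies equality'' tacitly requires each $\xi_i$ to lie in the $L^2(\phi)$-closure of the polynomials in $X_1,\ldots,X_d$, since an orthogonal component would inflate $\state{\xi_i^2}$ and destroy equality; this is the standard reading of ``formal conjugate variables,'' so it is not a gap specific to your write-up.
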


\begin{proof}
Since
\[
\sum_{i=1}^d \state{\A{\phi, \psi}{X_i}^2} = \sum_{i=1}^d \state{(X_i - \state{X_i})^2} = \sum_{i=1}^d \Var{X_i},
\]
it follows that
\[
\begin{split}
\state{\sum_{i=1}^d \xi_i^2}
& = \norm{(\xi_1, \xi_2, \ldots, \xi_d)}_{\phi}^2 \\
& \geq \frac{\ip{(\xi_1, \xi_2, \ldots, \xi_d)}{(\A{\phi, \psi}{X_1}, \A{\phi, \psi}{X_1}, \ldots, \A{\phi, \psi}{X_d})}_\phi^2}{\norm{(\A{\phi, \psi}{X_1}, \A{\phi, \psi}{X_1}, \ldots, \A{\phi, \psi}{X_d})}_{\phi}^2} \\
& = \frac{\Bigl(\sum_{i=1}^d \state{\xi_i \A{\phi, \psi}{X_i}} \Bigr)^2}{\sum_{i=1}^d \state{\A{\phi, \psi}{X_i}^2}}
= \frac{d^2}{\sum_{i=1}^d \Var{X_i}}.
\end{split}
\]
Therefore
\[
\state{\sum_{i=1}^d \xi_i^2} \state{\sum_{i=1}^d X_i^2} \geq \state{\sum_{i=1}^d \xi_i^2} \sum_{i=1}^d \Var{X_i} \geq d^2.
\]
The equality is achieved if and only if each $X_i$ is $\phi$-centered and $(\xi_1, \xi_2, \ldots, \xi_d)$ is a multiple of
\[
(\A{\phi, \psi}{X_1}, \A{\phi, \psi}{X_2}, \ldots, \A{\phi, \psi}{X_d}).
\]
Using the proof of Lemma~\ref{Lemma:Quadratic}, this says that $\CumFun{\phi, \psi}{\mb{z}} = \lambda \sum_{i=1}^d z_i^2$, in other words
\[
\eta^\mu(\mb{w}) = \lambda \sum_{i=1}^d w_i(1 + M^\nu(\mb{w})) w_i = \lambda \eta^{\State{\nu}}
\]
and $\mu = \State{\nu}^{\uplus \lambda}$.
\end{proof}

\section{Fock space representation and processes}
\label{Section:Processes}

\subsection{Fock space representation}
\label{Subsection:Fock}
Let $\mc{A}_0$ be an algebra without identity, and $\mu, \nu$ functionals on it. Let $\mc{A}$ be the unital algebra generated by non-commuting symbols $\set{X(f) : f \in \mc{A}^{sa}_0}$ subject to the linearity relations
\[
X(\alpha f + \beta g) = \alpha X(f) + \beta X(g).
\]
Equivalently, $\mc{A}$ is the tensor algebra of $\mc{A}_0$. The star-operation on it is determined by the requirement that all $X(f)$, $f \in \mc{A}^{sa}_0$ are self-adjoint. For such $f, f_i$, define the c-free Kailath-Segall polynomials to be multilinear maps $W$ from $\mc{A}^{sa}_0$ to $\mc{A}$ determined by
\begin{gather*}
X(f) = W(f) + \mu[f], \\
X(f) W(f_1) = W(f, f_1) + W(f f_1) + \mu[f f_1] + \nu[f] W(f_1), \\
\begin{split}
X(f) W(f_1, f_2, \ldots, f_n)
& = W(f, f_1, f_2, \ldots, f_n) + W(f f_1, f_2, \ldots, f_n) \\
&\quad + \nu[f f_1] W(f_2, \ldots, f_n) + \nu[f] W(f_1, f_2, \ldots, f_n).
\end{split}
\end{gather*}
Denoting
\[
f_\Lambda = \prod_{i \in \Lambda} f_i,
\]
$W(f_1, f_2, \ldots, f_n)$ is a polynomial in $\set{X(f_\Lambda): \Lambda \subset \set{1, 2, \ldots, n}}$.

\br
In the case that $\mu, \nu$ are positive (semi-)definite, the c-free Kailath-Segall polynomials have a representation on the Fock space
\[
\mf{C} \Omega \oplus L^2(\mc{A}_0, \mu) \oplus \bigoplus_{n=1}^\infty \left( L^2(\mc{A}_0, \nu)^{\otimes n} \otimes L^2(\mc{A}_0, \mu) \right)
\]
via
\begin{equation}
\label{Fock-representation}
W(f_1, f_2, \ldots, f_n) \Omega = f_1 \otimes f_2 \otimes \ldots \otimes f_n,
\end{equation}
so that for $f \in \mc{A}^{sa}_0$, $X(f)$ is a symmetric operator.

\begin{Example}
We recover the limiting distributions in the central and Poisson limit theorems of \cite{BLS96} as follows. Let
\[
\mc{A}_0 = \set{P(x) \in \mf{C}[x] | P(0) = 0}.
\]
In the analog of the Gaussian representation, we take $\mu[x] = \nu[x] = 0$, skip the second term in the recursions, and quotient out by the relation $x^2 - 1$ (see Example 4.14 of \cite{AnsAppell} for more details). Then each $W$ is a polynomial in $x$, with recursions
\begin{align*}
x & = W_1(x), \\
x W_1(x) & = W_2(x) + \mu[x^2], \\
x W_n(x) & = W_{n+1}(x) + \nu[x^2] W_{n-1}(x).
\end{align*}
Similarly, for the analog of the Poisson distribution, we take $\mu[x^i] = \mu[x^j]$, $\nu[x^i] = \nu[x^j]$ for all $i, j$, and quotient out by the relation $x^2 - x$. Again each $W$ is a polynomial in $x$, with recursions
\begin{align*}
x & = W_1(x) + \mu[x], \\
x W_1(x) & = W_2(x) + (1 + \nu[x]) W_1(x) + \mu[x], \\
x W_n(x) & = W_{n+1}(x) + (1 + \nu[x]) W_n(x) + \nu[x] W_{n-1}(x).
\end{align*}
\end{Example}

\begin{Prop}
\label{Prop:Kailath-Segall}
For $S \subset \Outer(\pi)$, $C \in \Outer(\pi) \backslash S$, denote
\[
C < S \text{ if } \exists c \in C, B \in S, b \in B: c < b
\]
and
\[
C > S \text{ if } \forall c \in C, B \in S, b \in B: c > b.
\]
\begin{enumerate}
\item
The monomials have an expansion in terms of the c-free Kailath-Segall polynomials:
\begin{multline*}
X(f_1) \ldots X(f_n) = \sum_{k=1}^n \sum_{\substack{\pi \in \NC(n) \\ \Outer(\pi) = \set{B_1, \ldots, B_k}}} \sum_{S \subset \Outer(\pi)} \prod_{C \in \Inner(\pi)} \nu[f_C] \prod_{\substack{C \in \Outer(\pi) \backslash S \\ C < S}} \nu[f_C] \\
\times \prod_{\substack{C \in \Outer(\pi) \backslash S \\ C > S}} \mu[f_C] \quad W(f_{B}: B \in S).
\end{multline*}
Pictorially, the outer classes $C$ with $C < S$ are ``potentially inner'', since they will become inner if the ``open'' classes of $S$ are closed with something to the left of them. See Figure~\ref{Figure:X} for an example.

\begin{figure}[hhh]
\psfig{figure=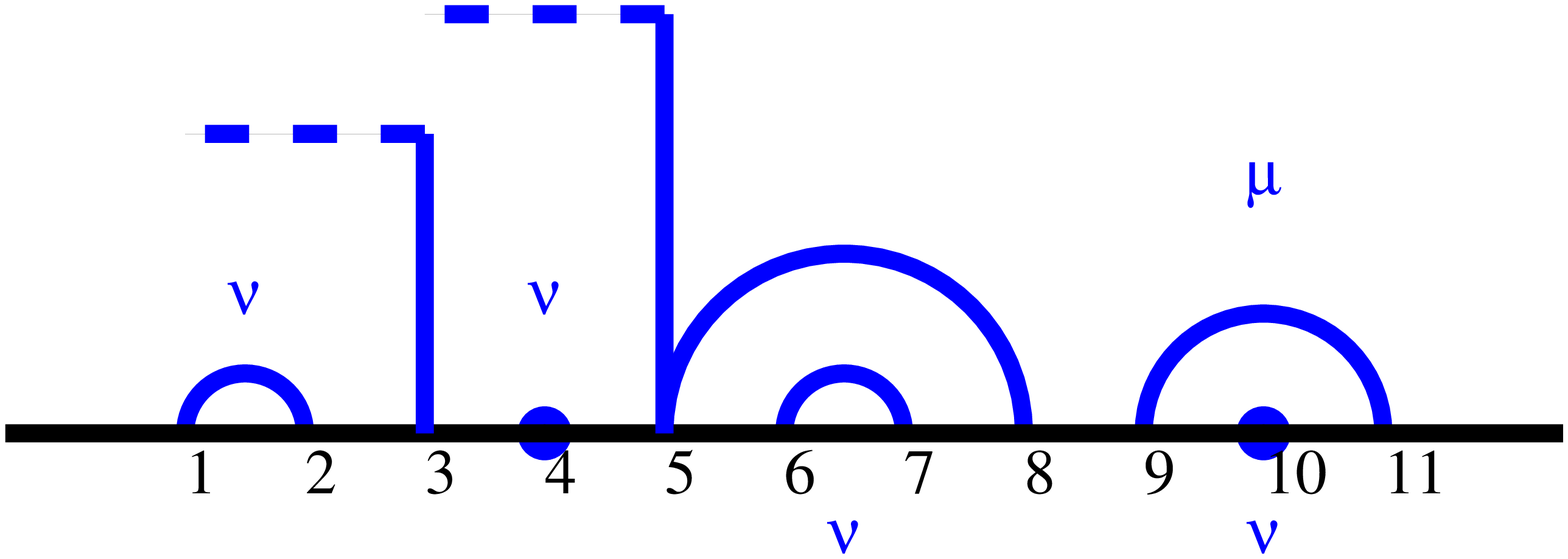,height=0.2\textwidth,width=0.6\textwidth}
\caption{\label{Figure:X}
Graphical representation of the term
\[
\nu[f_6 f_7] \ \nu[f_{10}] \ \nu[f_1 f_2] \ \nu[f_4] \ \mu[f_9 f_{11}] \ W(f_3, f_5 f_8),
\]
with $\pi = \set{(1,2), (3), (4), (5,8), (6,7), (9,11), (10)}$ and $S = \set{(3), (5,8)}$.}
\end{figure}

\item
In the Fock space representation \eqref{Fock-representation},
\[
\mu[f_1 f_2 \ldots f_n] = \Cum{\phi, \psi}{X(f_1), X(f_2), \ldots, X(f_n)}
\]
and
\[
\nu[f_1 f_2 \ldots f_n] = \Cum{\psi}{X(f_1), X(f_2), \ldots, X(f_n)}
\]
for
\[
\state{X(f_1) X(f_2) \ldots X(f_n)} = \ip{\Omega}{X(f_1) \ldots X(f_n) \Omega}
\]
and
\[
\psi\left[ X(f_1) \ldots X(f_n) \right] = \sum_{\pi \in \NC(n)} \prod_{C \in \pi} \nu[f_C].
\]
It follows that if $\set{f_i}$ are mutually orthogonal, meaning $f_i f_j = 0$ for $i \neq j$, then $\set{X(f_i)}$ are freely independent with respect to $\psi$ and c-free with respect to $(\phi, \psi)$ (and also $(\phi | \psi)$-free).
\item
The c-free Kailath-Segall polynomials are
\[
W(f_1, f_2, \ldots, f_n) = \sum_{\pi \in \Int(n)} \sum_{S \subset \Sing(\pi)} (-1)^{n - \abs{S^c}} \prod_{\substack{\set{i} \in S \\ i \neq n}} \nu[f_i] \prod_{\set{n} \in S} \mu[f_n] \prod_{B \in S^c} X(f_B),
\]
where $\Sing(\pi)$ are all the singletons (one-element classes) of $\pi$, and $\prod_{\set{n} \in S} \mu[f_n] = 1$ if $\set{n} \not \in S$. See Figure~\ref{Figure:W} for an example.

\begin{figure}[hhh]

\psfig{figure=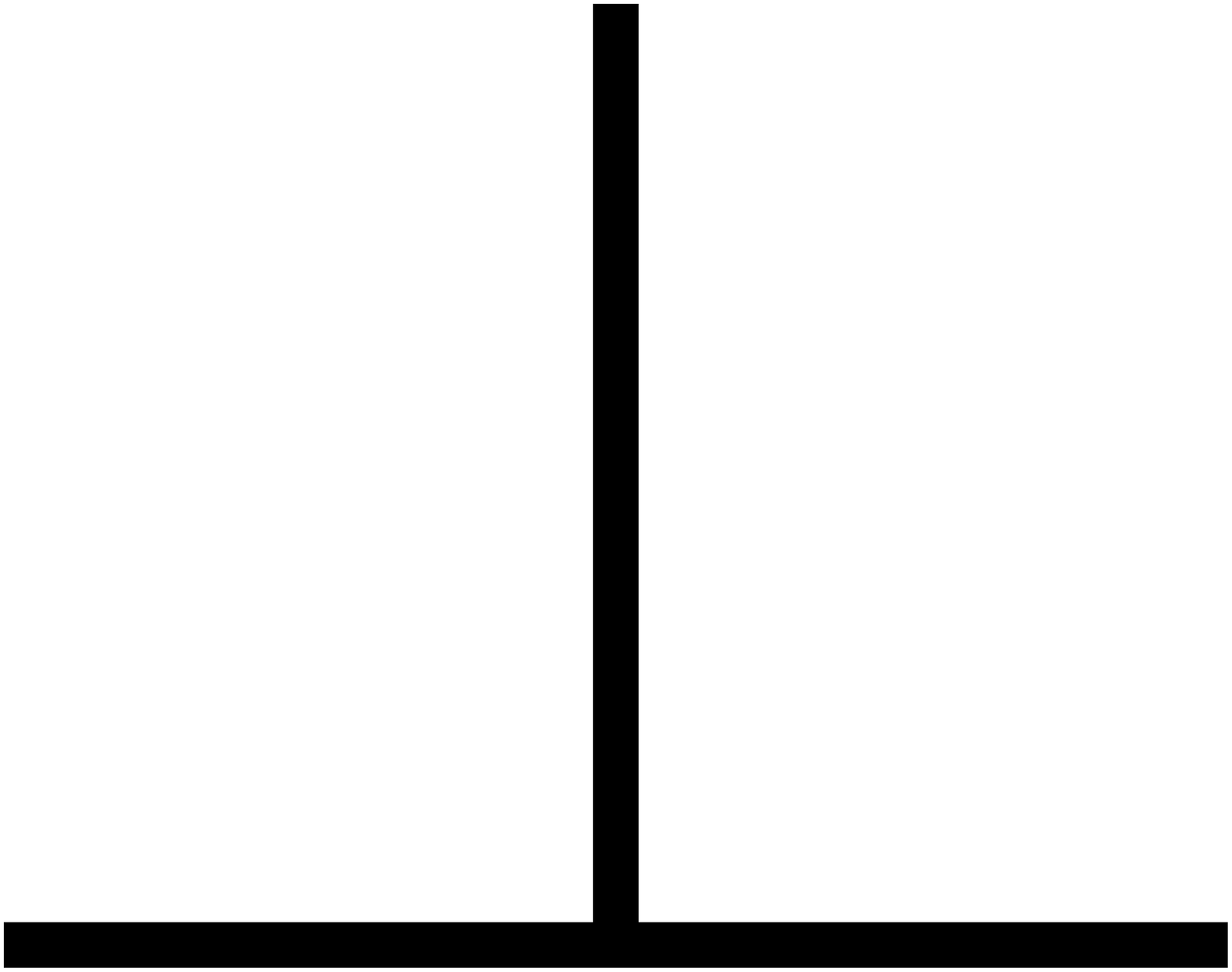,height=0.08\textwidth,width=0.08\textwidth} \psfig{figure=one1b.eps,height=0.08\textwidth,width=0.08\textwidth} - \psfig{figure=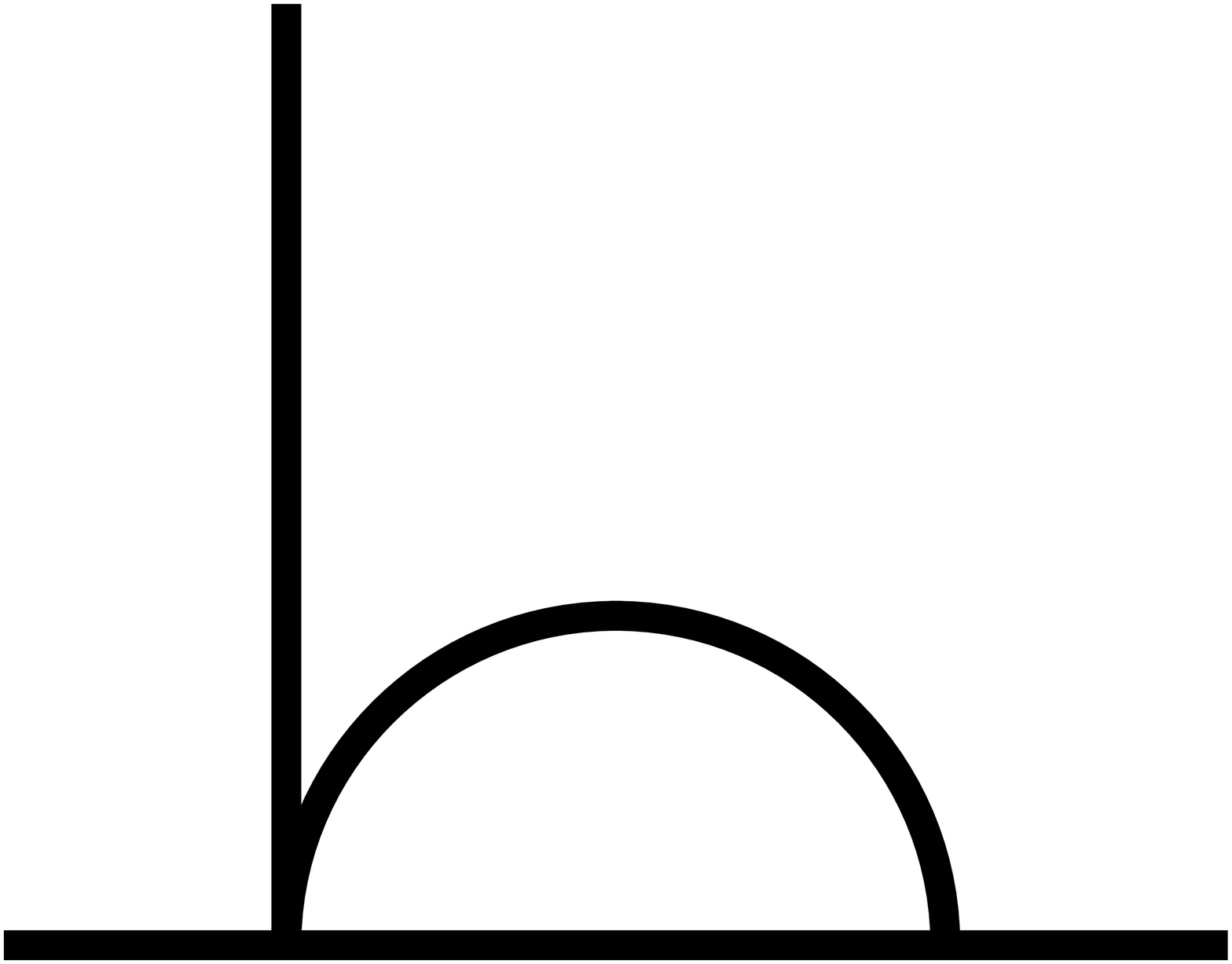,height=0.08\textwidth,width=0.16\textwidth} - \psfig{figure=one1b.eps,height=0.08\textwidth,width=0.08\textwidth} \psfig{figure=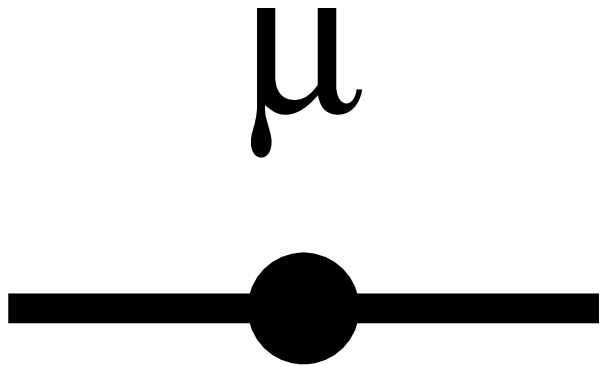,height=0.05\textwidth,width=0.08\textwidth} - \psfig{figure=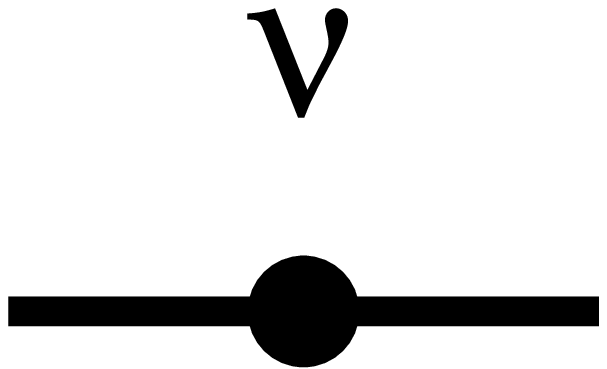,height=0.05\textwidth,width=0.08\textwidth} \psfig{figure=one1b.eps,height=0.08\textwidth,width=0.08\textwidth} + \psfig{figure=one2b-nu.eps,height=0.05\textwidth,width=0.08\textwidth} \psfig{figure=one2b-mu.eps,height=0.05\textwidth,width=0.08\textwidth}
\caption{\label{Figure:W}
$W(f_1, f_2) = X(f_1) X(f_2) - X(f_1 f_2) - X(f_1) \mu[f_2] - \nu[f_1] X(f_2) + \nu[f_1] \mu[f_2]$.}
\end{figure}

\item
The c-free Appell polynomials have an expansion in terms of the c-free Kailath-Segall polynomials:
\[
\A{\phi, \psi}{X(f_1), X(f_2), \ldots, X(f_n)}
= \sum_{\substack{\pi \in \Int(n) \\ \pi = (B_1, B_2, \ldots, B_k)}} W(f_{B_1}, f_{B_2}, \ldots, f_{B_k}).
\]
In particular, this linear combination of the Kailath-Segall polynomials is in fact a polynomial in $\set{X(f_1), X(f_2), \ldots, X(f_n)}$ only. See Figure~\ref{Figure:Appell} for an example.

\begin{figure}[hhh]

\psfig{figure=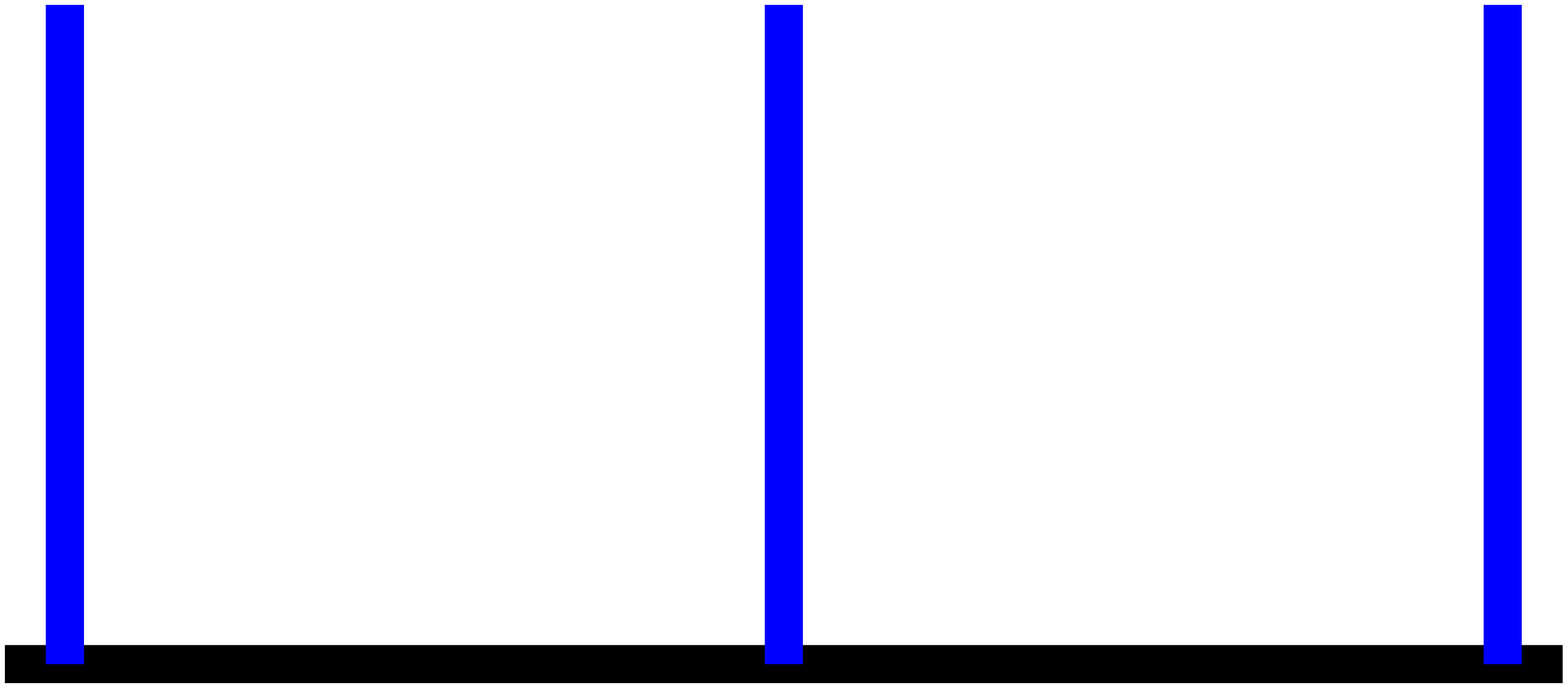,height=0.1\textwidth,width=0.2\textwidth} + \psfig{figure=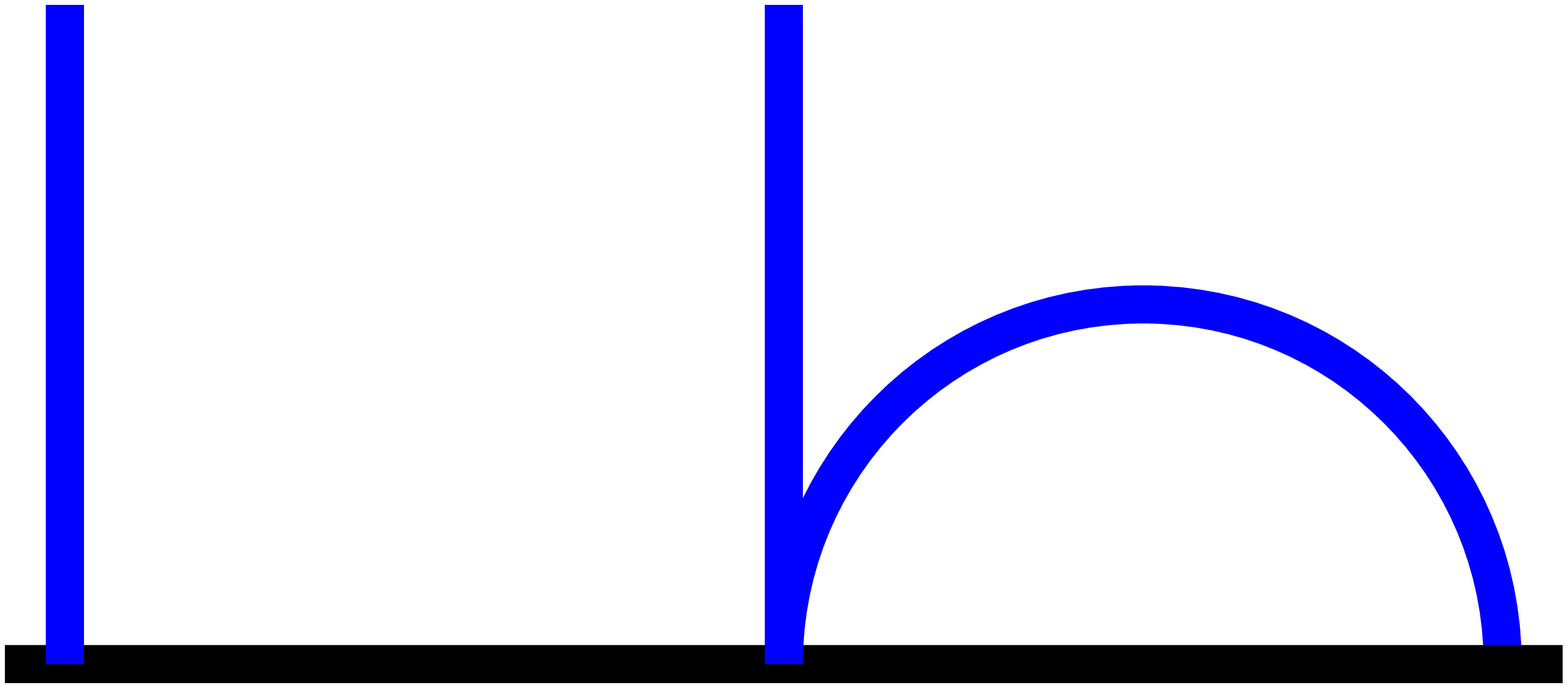,height=0.1\textwidth,width=0.2\textwidth}+ \psfig{figure=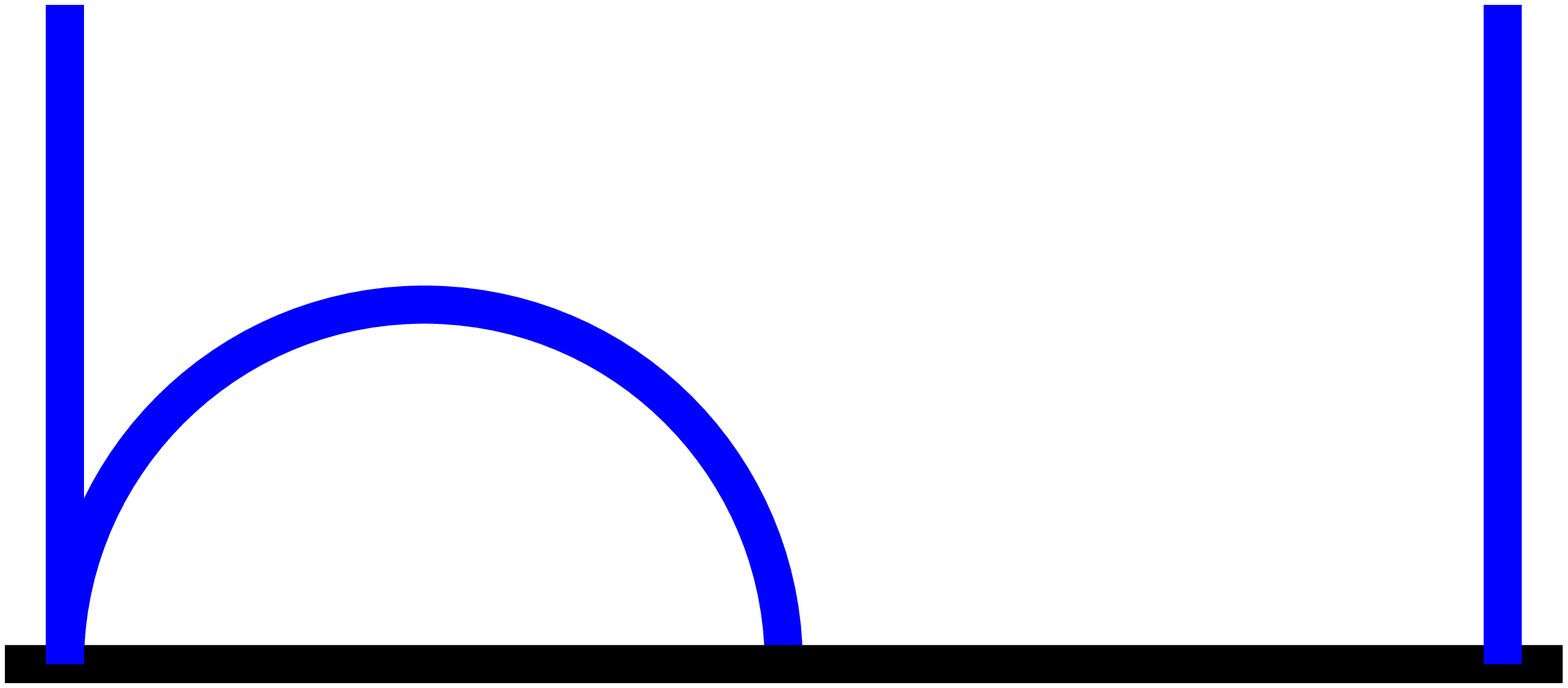,height=0.1\textwidth,width=0.2\textwidth} + \psfig{figure=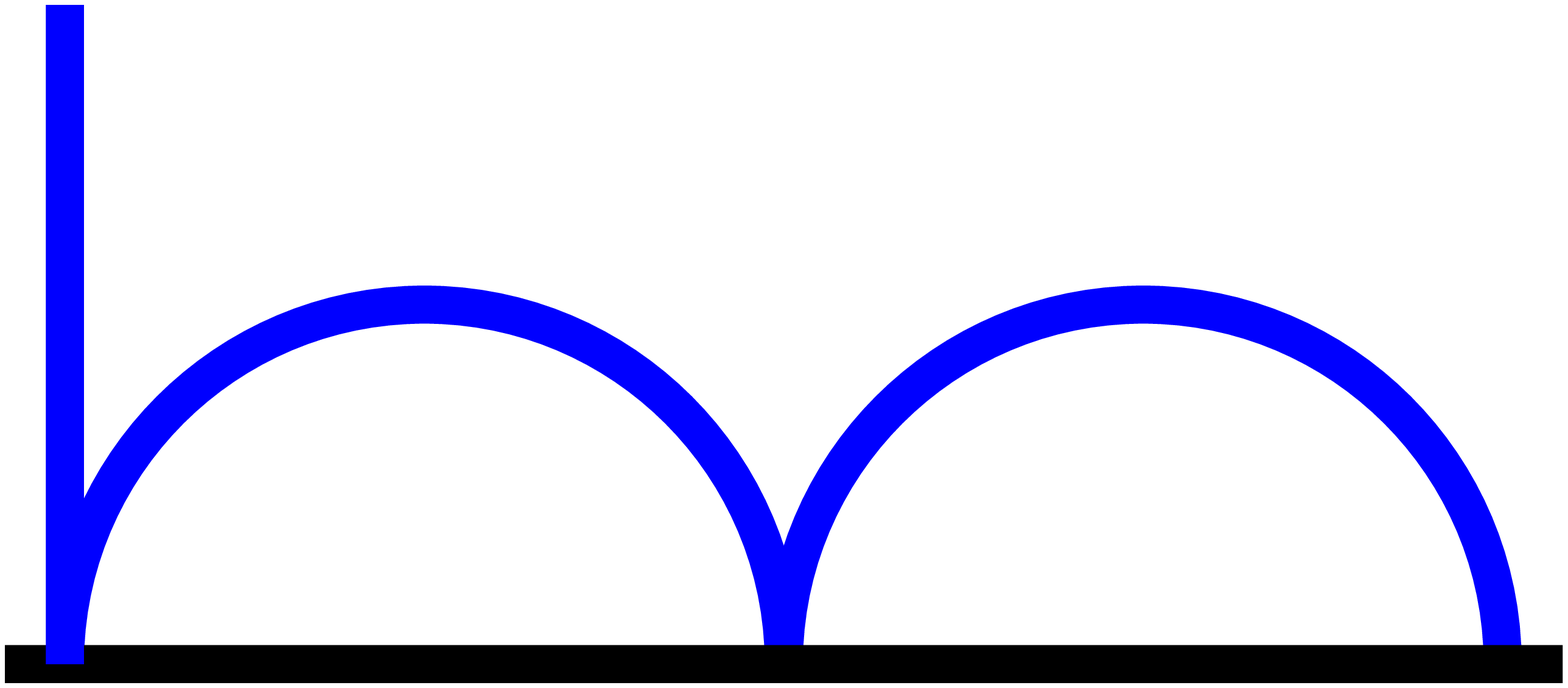,height=0.1\textwidth,width=0.2\textwidth}
\caption{\label{Figure:Appell}
$\A{\phi, \psi}{X(f_1), X(f_2), X(f_3)}$ expanded as $W(f_1, f_2, f_3) + W(f_1, f_2 f_3) + W(f_1 f_2, f_3) + W(f_1 f_2 f_3)$.}
\end{figure}

\end{enumerate}
\end{Prop}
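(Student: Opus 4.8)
\section*{Proof proposal}

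The four parts have different characters, and my plan is to treat (a) and (c) as purely algebraic identities in the tensor algebra $\mc{A}$, proved by induction straight from the three defining recursions for $W$, to derive (b) by feeding the Fock space into (a), and to obtain (d) by a recursion comparison that uses the cumulant identifications of (b). I would start with part (a), the structural heart of the statement, by induction on $n$; the base cases $n=1,2$ are exactly the first two recursions. For the step I substitute the inductive expansion of $X(f_2)\ldots X(f_n)$ into $X(f_1)\bigl(X(f_2)\ldots X(f_n)\bigr)$ and apply the third recursion to each term $X(f_1)\,W(f_B:B\in S)$. That recursion splits into four summands, and the plan is to match each to one way the new point $1$ is adjoined to a non-crossing partition of $\set{2,\ldots,n}$: the term $W(f_1,f_B:\ldots)$ creates a new open, hence outer, singleton $\set{1}$ lying to the left of every block of $S$; the term merging $f_1 f_{B_1}$ absorbs $1$ into the leftmost open block; and the two terms carrying $\nu[f_1 f_{B_1}]$ and $\nu[f_1]$ close $1$ off as, respectively, part of an inner block and an inner singleton. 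What I must verify is that these four moves reclassify the ``potentially inner'' outer classes (those with $C<S$) correctly and transform the ordering predicates $C<S$, $C>S$ and the weights $\nu[f_C]$ versus $\mu[f_C]$ consistently; this is where the real work lies.

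For part (c) I would again induct on $n$, now inverting the recursion: assuming the explicit interval-partition formula for all shorter arguments, I solve the defining recursion for $W(f_1,\ldots,f_n)$ and check that the signed sum over $\pi\in\Int(n)$ and $S\subset\Sing(\pi)$ reproduces it, with the sign $(-1)^{n-\abs{S^c}}$ and the singleton structure being exactly what an inclusion--exclusion inversion of the monomial expansion produces. For part (d), rather than expand combinatorially I would verify that the candidate $\sum_{\pi\in\Int(n)} W(f_{B_1},\ldots,f_{B_k})$ satisfies the two defining properties of the c-free Appell polynomials. Centering is immediate once part (b) gives $\state{W(\ldots)}=0$ for nonempty arguments (no empty-argument $W$ occurs for $n\geq 1$), and the recursion of Proposition~\ref{Prop:Recursions}(b), with $\Cum{\psi}{\cdot}=\nu$ and $\Cum{\phi,\psi}{\cdot}=\mu$ supplied by part (b), should emerge from the $W$-recursion after summing over interval partitions, the three groups of terms matching the three groups in the Appell recursion; uniqueness in the Appell definition then finishes it.

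Part (b) is where the Fock space enters. Applying the vacuum state $\phi=\ip{\Omega}{\,\cdot\,\Omega}$ to the expansion in part (a) kills every term with nonempty $S$, since $W(f_B:B\in S)\Omega$ is a nonzero tensor orthogonal to $\Omega$; only the terms with $S=\emptyset$ survive, for which $C<S$ is vacuously false and $C>S$ vacuously true, so every outer class contributes $\mu[f_C]$. This collapses (a) to $\state{X(f_1)\ldots X(f_n)}=\sum_{\pi\in\NC(n)}\prod_{C\in\Outer(\pi)}\mu[f_C]\prod_{C\in\Inner(\pi)}\nu[f_C]$, which is exactly the defining two-state moment--cumulant relation, identifying $\mu$ with $R^{\phi,\psi}$; the displayed free moment formula for $\psi$ simply records that $\nu$ is the free cumulant functional of $\psi$, the single-state ($\mu\mapsto\nu$) specialization of the same identity. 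For the orthogonality clause, if $f_i f_j=0$ for $i\neq j$ then $f_C=0$ for any block $C$ meeting two distinct indices, so all mixed cumulants $\mu[f_C]$ and $\nu[f_C]$ vanish; this is precisely $(\phi\mid\psi)$-freeness and, with the vanishing of mixed free cumulants giving $\psi$-free independence, yields c-freeness by the remark following Definition~\ref{Defn:c-free}.

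The single genuine difficulty is the combinatorial bookkeeping in part (a): confirming that the four-term recursion generates exactly the non-crossing partitions with the correct $\Inner/\Outer$ classification and, above all, the ordering-sensitive split of the non-$S$ outer classes into the ``potentially inner'' ones ($C<S$) weighted by $\nu[f_C]$ and the genuinely outer ones ($C>S$) weighted by $\mu[f_C]$. I would organize this by fixing, in the inductive partition of $\set{2,\ldots,n}$, which block contains the point $2$ and whether it is open, then tracking how prepending $1$ in each of the four ways reclassifies that block and shifts the dividing line between $C<S$ and $C>S$; getting the $\mu$-versus-$\nu$ assignment to agree across all cases is the crux, and an instance such as Figure~\ref{Figure:X} serves as a proof certificate for one configuration.
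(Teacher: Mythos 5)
Your proposal follows essentially the same route as the paper's own (outlined) proof: parts (a) and (c) by induction, left-multiplying by $X(f)$ and tracking how each term of the defining recursions transforms the pair $(\pi, S)$; part (b) by applying the vacuum state to (a) so that only $S = \emptyset$ terms survive, then reading off the cumulant identifications and killing mixed cumulants under orthogonality; and part (d) by showing the interval-partition sum satisfies the c-free Appell recursion and invoking uniqueness. One small correction to your bookkeeping in (a): the classes closed off by the $\nu[f_1 f_{B_1}]$ and $\nu[f_1]$ terms are not inner (they contain the leftmost point, hence remain outer) but are outer classes with $C < S$ --- precisely the ``potentially inner'' reclassification that you correctly single out as the crux of the verification.
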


\begin{proof}
The proofs are by induction and very similar to the calculations in \cite{AnsAppell}, so we only outline them. For part (a), multiplying the sum by $X(f_0)$ on the left and applying the recursions results in the following transformations of the pair $(\pi, S)$: for $B$ the left-most class of $S$,
\[
\begin{split}
\text{If } S = \emptyset, \quad & \pi \mapsto \set{0} \cup \pi, \quad S \mapsto \set{0} \text{ or } S \mapsto S, \\
\text{If } \abs{S} \geq 1, \quad & \pi \mapsto \set{0} \cup \pi, \quad S \mapsto \set{0} \cup S \text{ or } S \mapsto S, \\
                              & \pi \mapsto \set{\set{0} \cup B} \cup (\pi \backslash \set{B}), \quad S \mapsto \set{\set{0} \cup B} \cup (S \backslash \set{B}) \text{ or } S \mapsto S \backslash \set{B},
\end{split}
\]
which produce all possible choices of new such pairs.

\br
Part (b) follows from part (a) since
\[
\ip{\Omega}{X(f_1) \ldots X(f_n) \Omega} = \sum_{\pi \in \NC(n)} \prod_{C \in \Inner(\pi)} \nu[f_C] \prod_{B \in \Outer(\pi)} \mu[f_B].
\]

\br
For part (c), using this expansion for $W(f_0, f_1, \ldots, f_n)$ and applying the recursions gives the following transformations: for $C$ the left-most class of $\pi$,
\[
\begin{split}
& \pi \mapsto \set{0} \cup \pi, \quad S \mapsto S \text{ or } S \mapsto \set{0} \cup S, \\
& \pi \mapsto \set{\set{0} \cup C} \cup (\pi \backslash \set{C}), \quad S \mapsto \set{\set{0} \cup C} \cup (S \backslash \set{C}) \text{ or } S \mapsto S, \\
& \pi \mapsto \set{\set{0} \cup C} \cup (\pi \backslash \set{C}), \quad S \mapsto \set{\set{0} \cup C} \cup (S \backslash \set{C}).
\end{split}
\]
The identical terms in the second and third line come with opposite signs, and as a result one again gets all possible pairs such that $S$ consists of singletons.

\br
For part (d), we show that the sum on the right-hand-side satisfies the recursion of the c-free Appell polynomials. The proof is identical to Proposition 3.22 of \cite{AnsAppell}.
\end{proof}

\subsection{Processes with c-free increments and polynomial martingales}
\label{Subsection:Processes}
There are two natural ways to evolve pairs of states in two-state free probability theory. One way is to choose a freely infinitely divisible state $\rho$ and an arbitrary state $\psi$, and evolve $\phi$ with $\CumFun{\phi, \psi}{\mb{z}} = \CumFun{\rho}{\mb{z}}$ as
\[
\CumFun{\phi(t), \psi}{\mb{z}} = t \CumFun{\rho}{\mb{z}} = \CumFun{\rho^{\boxplus t}}{\mb{z}}.
\]
It is not hard to see that in this case, $\phi(t) = \phi^{\uplus t}$, and so for any $\psi$, we get a Boolean convolution semigroup, which corresponds to a process with Boolean independent increments. The other way is the evolution $\phi(t)$ of Remark~8 of \cite{AnsEvolution}, which corresponds to processes with c-free increments \emph{and} $\psi$-free increments. A Fock space representation in Remark~\ref{Remark:Fock-processes}, or the constructions in \cite{Boz-Spe-Psi-independent}, provide examples of such processes.

\begin{Prop}
\label{Prop:binomial}
Let $\pi \in \Part(n)$ have the property that the collections $\set{X_i: i \in B}_{B \in \pi}$ are c-free with respect to $(\phi, \psi)$ and freely independent with respect to $\psi$. Let $\sigma \in \Int(n)$,
\[
\sigma = (C_1, C_2, \ldots, C_k)
\]
be the largest partition in $\Int(n)$ with $\sigma \leq \pi$. In other words, classes of $\sigma$ are the largest consecutive subsets of classes of $\pi$. Then
\[
\A{\phi, \psi}{X_1, X_2, \ldots, X_n} = \prod_{i=1}^{k-1} \A{\psi}{X_j: j \in C_i} \A{\phi, \psi}{X_j: j \in C_k}.
\]
\end{Prop}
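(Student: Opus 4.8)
The plan is to show that the right-hand side, which I will write as $P = Q_1 Q_2 \cdots Q_{k-1} R$ with $Q_l = \A{\psi}{X_j : j \in C_l}$ and $R = \A{\phi, \psi}{X_j : j \in C_k}$, satisfies the two defining properties \eqref{Appell-definition-1} and \eqref{Appell-definition-2} of $\A{\phi, \psi}{X_1, \ldots, X_n}$, and then to invoke the uniqueness asserted in the definition. I would argue by induction on $n$, the case $n \le 1$ (equivalently $k = 1$) being immediate since then $P = \A{\phi, \psi}{X_1, \ldots, X_n}$ outright. First I would record the independence bookkeeping that is used repeatedly: since the blocks $\set{X_i : i \in B}_{B \in \pi}$ are simultaneously c-free with respect to $(\phi, \psi)$ and $\psi$-freely independent, the same holds after grouping them into the consecutive runs $C_1, \ldots, C_k$. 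Grouping preserves $\psi$-free independence, and, because the combination of $\psi$-freeness and c-freeness is exactly the cumulant condition $(\phi \mid \psi)$-freeness (the remark after Definition~\ref{Defn:c-free}), it is stable under grouping as well; hence the collections $\set{X_j : j \in C_l}$ are again c-free and $\psi$-free.

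For the centering \eqref{Appell-definition-2}, I observe that the interior factors $Q_2, \ldots, Q_{k-1}$ are $\psi$-centered by \eqref{Free-Appell-definition-2} and that the factors lie in consecutive, distinct, c-free subalgebras, so Lemma~\ref{Lemma:Endpoints-c-free} yields $\state{P} = \state{Q_1} \state{Q_2} \cdots \state{Q_{k-1}} \state{R}$. Since $R$ is $\phi$-centered by \eqref{Appell-definition-2}, this product vanishes (and for $k = 1$ it is simply $\state{R} = 0$).

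The heart of the argument is the derivative relation $\partial_i P = \A{\psi}{X_1, \ldots, X_{i-1}} \otimes \A{\phi, \psi}{X_{i+1}, \ldots, X_n}$ of \eqref{Appell-definition-1}. Each $X_i$ occurs in exactly one factor of $P$, so by the Leibniz rule for the difference quotient only that factor is differentiated. If $i \in C_l$ with $l < k$, applying \eqref{Free-Appell-definition-1} to $Q_l$ gives
\[
\partial_i P = Q_1 \cdots Q_{l-1} \A{\psi}{X_j : j \in C_l,\ j < i} \otimes \A{\psi}{X_j : j \in C_l,\ j > i} \, Q_{l+1} \cdots Q_{k-1} R,
\]
while if $i \in C_k$ the analogous splitting uses \eqref{Appell-definition-1} for $R$. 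In either case I would identify the left leg with $\A{\psi}{X_1, \ldots, X_{i-1}}$ by the factorization property of the \emph{free} Appell polynomials (the $\phi = \psi$ specialization of the present statement, i.e.\ the corresponding result of \cite{AnsAppell}), applied to the prefix variables, whose induced largest interval partition is $C_1, \ldots, C_{l-1}$ followed by the interval blocks of $C_l \cap \set{1, \ldots, i-1}$; and I would identify the right leg with $\A{\phi, \psi}{X_{i+1}, \ldots, X_n}$ by the induction hypothesis, applied to the suffix variables, whose induced largest interval partition ends in $C_k$. This matches \eqref{Appell-definition-1}, and together with the centering it closes the induction.

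The one genuinely delicate point, and the step I expect to require the most care, is the combinatorial bookkeeping of the induced interval partitions when a block of $\pi$ straddles the differentiation site $i$: restriction to a prefix or suffix may split $C_l$ into several interval blocks, so one must verify that the free (respectively c-free) factorization is consistent under this refinement, i.e.\ that grouping $C_1, \ldots, C_{l-1}$ together with whatever $C_l \cap \set{j < i}$ decomposes into reproduces exactly $Q_1 \cdots Q_{l-1} \A{\psi}{X_j : j \in C_l,\ j < i}$. This holds because separated runs of $\pi$ remain separated under restriction (removing points creates no new links) and because the factorizations nest consistently, but it is where the argument is least mechanical.
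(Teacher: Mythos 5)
Your proposal is correct and follows essentially the same route as the paper: show the product on the right-hand side satisfies the two defining properties \eqref{Appell-definition-1} and \eqref{Appell-definition-2} and invoke uniqueness, using the Leibniz rule plus the factorization property of the free Appell polynomials (Proposition 3.13 of \cite{AnsAppell}) to collapse the left tensor leg, and Lemma~\ref{Lemma:Endpoints-c-free} together with $\psi$-centering of the interior factors and $\phi$-centering of the last factor to get \eqref{Appell-definition-2}. The explicit induction identifying the right tensor leg, and the bookkeeping about grouping and restriction of the partition, are exactly the steps the paper leaves implicit, so your extra care there is a refinement of, not a departure from, the paper's argument.
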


\begin{proof}
It suffices to show that the right-hand side of the equation above satisfies the two conditions in the definition of $A^{\phi, \psi}$. Indeed, for $s \in C_l$, $l < k$,
\[
\begin{split}
& \partial_s \Bigl( \prod_{i=1}^{k-1} \A{\psi}{X_j: j \in C_i} \A{\phi, \psi}{X_j: j \in C_k} \Bigr) \\
&\quad = \prod_{i=1}^{l-1} \A{\psi}{X_j: j \in C_i} \A{\psi}{X_j: j \in C_l, j < s} \otimes \A{\psi}{X_j: j \in C_l, j > s} \A{\phi, \psi}{X_j: j \in C_k} \\
&\quad = \A{\psi}{X_j: j < s} \otimes \A{\psi}{X_j: j \in C_l, j > s} \A{\phi, \psi}{X_j: j \in C_k},
\end{split}
\]
and for $s \in C_k$,
\[
\begin{split}
& \partial_s \Bigl( \prod_{i=1}^{k-1} \A{\psi}{X_j: j \in C_i} \A{\phi, \psi}{X_j: j \in C_k} \Bigr) \\
&\quad = \prod_{i=1}^{k-1} \A{\psi}{X_j: j \in C_i} \A{\psi}{X_j: j \in C_k, j < s} \otimes  \A{\phi, \psi}{X_j: j \in C_k, j > s} \\
&\quad = \A{\psi}{X_j: j < s} \otimes  \A{\phi, \psi}{X_j: j \in C_k, j > s},
\end{split}
\]
where we have used Proposition 3.13 of \cite{AnsAppell}, which states the same factorization property for the free Appell polynomials. Also, by Lemma~\ref{Lemma:Endpoints-c-free},
\begin{multline*}
\state{\prod_{i=1}^{k-1} \A{\psi}{X_j: j \in C_i} \A{\phi, \psi}{X_j: j \in C_k}} \\
= \prod_{i=1}^{k-1} \state{\A{\psi}{X_j: j \in C_i}} \state{\A{\phi, \psi}{X_j: j \in C_k}}
= 0. \qedhere
\end{multline*}
\end{proof}

\noindent
See Remark~5 of \cite{AnsBoolean} or Remark~2.2 of \cite{Boz-Bryc-Two-states} on the relation which the following result bears to the statement ``the c-free Appell polynomials are martingale polynomials''.

\begin{Prop}
If $\set{X_i, Y_i | i = 1, \ldots, n} \subset (\mc{A}^{sa}, \phi, \psi)$, $\mc{B} \subset \mc{A}$ a subalgebra, $\set{X_i} \subset \mc{B} \subset \mc{A}$ and $\set{Y_i}$ are c-free and $\psi$-freely independent from $\mc{B}$, then for any $X \in \mc{B}$,
\[
\state{X \A{\phi, \psi}{X_1 + Y_1, X_2 + Y_2, \ldots, X_n + Y_n}} = \state{X \A{\phi, \psi}{X_1, X_2, \ldots, X_n}}.
\]
\end{Prop}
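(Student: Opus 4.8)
The plan is to reduce the whole statement to the vanishing of mixed two-state free cumulants. First I would use the fact (noted immediately after the definition of the c-free Appell polynomials) that each $\A{\phi, \psi}{\cdot}$ is multilinear in its arguments, so that expanding each slot $X_i + Y_i$ yields
\[
\A{\phi, \psi}{X_1 + Y_1, \ldots, X_n + Y_n} = \sum_{S \subseteq \set{1, \ldots, n}} \A{\phi, \psi}{W_1^S, \ldots, W_n^S},
\]
where $W_i^S = Y_i$ if $i \in S$ and $W_i^S = X_i$ otherwise. The term $S = \emptyset$ is exactly $\A{\phi, \psi}{X_1, \ldots, X_n}$, so it suffices to show that $\state{X \A{\phi, \psi}{W_1^S, \ldots, W_n^S}} = 0$ for every nonempty $S$.

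For this I would first record the identity
\[
\state{X \A{\phi, \psi}{Z_1, \ldots, Z_n}} = \Cum{\phi, \psi}{X, Z_1, \ldots, Z_n},
\]
valid for any $X$ and any $Z_1, \ldots, Z_n$. This follows by applying $\phi$ to the recursion of Proposition~\ref{Prop:Recursions}(b): the leading term $\A{\phi, \psi}{X, Z_1, \ldots, Z_n}$ is $\phi$-centered, and each summand $\Cum{\psi}{X, Z_1, \ldots, Z_j} \A{\phi, \psi}{Z_{j+1}, \ldots, Z_n}$ with $0 \le j \le n-1$ has a $\phi$-centered Appell factor (the list $Z_{j+1}, \ldots, Z_n$ is nonempty), leaving only $\Cum{\phi, \psi}{X, Z_1, \ldots, Z_n}$. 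This is the same computation already used in the proof of Lemma~\ref{Lemma:Quadratic}.

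Applying this with $Z_i = W_i^S$ reduces the claim to showing $\Cum{\phi, \psi}{X, W_1^S, \ldots, W_n^S} = 0$ whenever $S \neq \emptyset$. Here I would invoke the hypothesis that $\set{Y_i}$ is c-free and $\psi$-freely independent from $\mc{B}$: by the remark following Definition~\ref{Defn:c-free}, under the extra assumption of $\psi$-free independence conditional freeness is equivalent to $(\phi \mid \psi)$-freeness, so $\mc{B}$ and $\Alg{\set{Y_i}}$ are $(\phi \mid \psi)$-free. Since $X$ and all $W_i^S$ with $i \notin S$ lie in $\mc{B}$, while $W_i^S = Y_i$ for $i \in S$ lies in $\Alg{\set{Y_i}}$, a nonempty $S$ means the arguments straddle two distinct subalgebras, and the mixed cumulant vanishes by the definition of $(\phi \mid \psi)$-freeness. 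Summing over $S$, only the $S = \emptyset$ term survives, which is precisely the right-hand side.

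The main obstacle is the bookkeeping in the last step: one must verify that the two hypotheses together genuinely yield $(\phi \mid \psi)$-freeness of the pair $\mc{B}$, $\Alg{\set{Y_i}}$ (rather than mere c-freeness, which would not force cumulant vanishing), so that every cumulant mixing the two subalgebras is zero. Once this is in place, the multilinear expansion and the reduction to cumulants via the recursion are routine.
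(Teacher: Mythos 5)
Your proof is correct, but it takes a genuinely different route from the paper's. Both arguments begin by expanding $\A{\phi,\psi}{X_1+Y_1,\ldots,X_n+Y_n}$ by multilinearity; after that, you work at the level of cumulants, while the paper works at the level of moments. Your key identity $\state{X \A{\phi, \psi}{Z_1, \ldots, Z_n}} = \Cum{\phi, \psi}{X, Z_1, \ldots, Z_n}$ is valid---it is the same application of Proposition~\ref{Prop:Recursions}(b) that appears in the proof of Lemma~\ref{Lemma:Quadratic}---and your appeal to $(\phi \mid \psi)$-freeness to kill the mixed cumulants is legitimate, because the remark after Definition~\ref{Defn:c-free}, citing \cite{Boz-Bryc-Two-states}, states that c-freeness and $(\phi\mid\psi)$-freeness become equivalent in the presence of $\psi$-free independence, which is exactly the hypothesis here. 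The paper instead uses the immediately preceding Proposition~\ref{Prop:binomial}: each mixed term $\A{\phi,\psi}{W_1,\ldots,W_n}$ factors into an alternating product of $\A{\psi}{\cdot}$'s over the consecutive blocks of $X$'s and $Y$'s, ending in an $\A{\phi,\psi}{\cdot}$ factor; applying the definition of conditional freeness together with Lemma~\ref{Lemma:Endpoints-c-free} turns $\phi$ of this alternating product into a product of $\phi$'s, and since the trailing $\A{\phi,\psi}{\cdot}$ factor is $\phi$-centered, every term containing at least one $Y$ vanishes. The trade-off is this: your argument is shorter and stays within the cumulant formalism, but its crux---that the two hypotheses force vanishing of mixed two-state cumulants---is precisely the equivalence the paper quotes from the literature without proof, so your argument is not self-contained relative to the paper's own results; the paper's proof uses only statements proved in the paper plus the bare moment definition of c-freeness, consistent with the author's earlier comment that conditional freeness itself is invoked only in this section. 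One small loose end on your side: the definition of $(\phi\mid\psi)$-freeness only forces vanishing when the arguments do not all lie in one subalgebra, so degenerate cases (e.g.\ scalar $X$, or scalar $X_i$'s, which lie in both subalgebras) need a word---either note that two-state cumulants with a scalar entry vanish, or observe that the claimed equality is trivial for scalar $X$ by linearity.
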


\begin{proof}
Using the preceding proposition, the expression
\[
\phi \Bigl[ X A^{\phi, \psi} (X_1 + Y_1, X_2 + Y_2, \ldots, X_n + Y_n) \Bigr]
\]
can be expanded as
\[
\begin{split}
\sum_{\substack{\vec{u} = (u(1), u(2), \ldots, u(2k)) \\ u(1) + u(2) + \ldots + u(2k) = n}} & \phi \Bigl[X \A{\psi}{X_1, \ldots, X_{u(1)}} \A{\psi}{Y_{u(1)+1}, \ldots, Y_{u(1) + u(2)}} \\
& \qquad \ldots \A{\phi, \psi}{Y_{u(1) + \ldots + u(2k-1) + 1}, \ldots, Y_{u(1) + \ldots + u(2k)}} \Bigr] \\
+ \sum & \phi \Bigl[X \A{\psi}{X_1, \ldots} \ldots \A{\phi, \psi}{\ldots, X_n} \Bigr] \\
+ \sum & \phi \Bigl[X \A{\psi}{Y_1, \ldots} \ldots \A{\phi, \psi}{\ldots, X_n} \Bigr]
+ \sum \phi \Bigl[X \A{\psi}{Y_1, \ldots} \ldots \A{\phi, \psi}{\ldots, Y_n} \Bigr].
\end{split}
\]
Each term in this sum is of the form $\state{x_1 y_2 x_2 y_2 \ldots}$, with $x_i \in \mc{B}$, $y_i$ c-free and $\psi$-freely independent from $\mc{B}$, and all the terms satisfying $\psi[x_i] = \psi[y_i] = 0$, except $x_1$ and the last term. Applying conditional freeness and Lemma~\ref{Lemma:Endpoints-c-free}, this equals the product
\[
\state{x_1} \state{y_1} \state{x_2} \state{y_2} \ldots.
\]
Moreover, unless there are no $y$ terms at all, the last term is $A^{\phi, \psi}$ and $\phi$ applied to it is zero. So the only non-zero term in the sum is
\[
\state{X \A{\phi, \psi}{X_1, X_2, \ldots, X_n}}. \qedhere
\]
\end{proof}

\begin{Remark}[Fock space representation of processes with c-free increments]
\label{Remark:Fock-processes}
Let $\phi, \psi$ be states on $\mf{C} \langle \mb{x} \rangle$ such that $(\phi, \psi)$ are c-freely infinitely divisible and $\psi$ is freely infinitely divisible. In other words, $R^{\phi, \psi}$ and $R^\psi$ are conditionally positive definite, see \cite{AnsEvolution}. On
\[
\mc{A}_0 = \set{ P(\mb{x}) \in \mf{C} \langle \mb{x} \rangle | P(0) = 0} \otimes L^\infty[0,1],
\]
define functionals
\[
\mu[P(\mb{x}) \otimes f] = \Cum{\phi, \psi}{P(\mb{x})} \int_0^1 f(t) \,dt,
\]
\[
\nu[P(\mb{x}) \otimes f] = \Cum{\psi}{P(\mb{x})} \int_0^1 f(t) \,dt,
\]
Then $\mu, \nu$ are positive semi-definite, so one has a Fock representation for the corresponding c-free Kailath-Segall polynomials as in Proposition~\ref{Prop:Kailath-Segall}. Denote
\[
X_i(t) = X(x_i \otimes \chf{[0,t)}).
\]
Then $\set{X(t)}$ is a process with stationary c-free and $\psi$-freely independent increments.

\br
Let
\[
\state{W(P_1 \otimes f_1, P_2 \otimes f_2, \ldots, P_n \otimes f_n) | t} = W(P_1 \otimes \chf{[0,t)} f_1, P_2 \otimes \chf{[0,t)} f_2, \ldots, P_n \otimes \chf{[0,t)} f_n),
\]
and extend this operation to all of $\mc{A}$. It follows from Proposition~\ref{Prop:Kailath-Segall} that
\[
\state{ \left.A_{\vec{u}}^{X_1(t), \ldots, X_d(t)}(X_1(t), \ldots, X_d(t)) \ \right| s}
= A_{\vec{u}}^{X_1(s), \ldots, X_d(s)}(X_1(s), \ldots, X_d(s)).
\]
\end{Remark}


\def\cprime{$'$}
\providecommand{\bysame}{\leavevmode\hbox to3em{\hrulefill}\thinspace}
\providecommand{\MR}{\relax\ifhmode\unskip\space\fi MR }
\providecommand{\MRhref}[2]{%
  \href{http://www.ams.org/mathscinet-getitem?mr=#1}{#2}
}
\providecommand{\href}[2]{#2}

\end{document}